\renewcommand*{\backrefalt}[4]{%
    \ifcase #1 \footnotesize{(Not cited.)}%
    \or        \footnotesize{(Cited on page~#2.)}%
    \else      \footnotesize{(Cited on pages~#2.)}%
    \fi}
\long\def\comment#1{}
\newtheorem{theorem}{Theorem}[section]
\newtheorem{lemma}[theorem]{Lemma}
\newtheorem{proposition}[theorem]{Proposition}
\newtheorem{definition}{Definition}[section]
\newtheorem{remark}{Remark}[section]
\newtheorem{assumption}{Assumption}[section]
\numberwithin{equation}{section}
\newcommand{\BB}{\mathbb{B}}
\newcommand{\EE}{\mathbb{E}}
\newcommand{\conv}{\textnormal{conv}}
\newcommand{\algo}{\textsc{A}}
\newcommand{\bound}{\textsc{T}}
\newcommand{\zr}{\textnormal{zr}}
\newcommand{\supp}{\textnormal{supp}\,}
\newcommand{\x}{\mathbf x}
\newcommand{\y}{\mathbf y}
\newcommand{\g}{\mathbf g}
\newcommand{\z}{\mathbf z}
\newcommand{\su}{\mathbf u}
\newcommand{\sv}{\mathbf v}
\newcommand{\argmin}{\mathop{\rm{argmin}}}
\newcommand{\argmax}{\mathop{\rm{argmax}}}
\newcommand{\TCal}{\mathcal{T}}
\newcommand{\XCal}{\mathcal{X}}
\newcommand{\one}{\mathbf{1}}
\newcommand{\zero}{\mathbf{0}}
\newcommand{\br}{\mathbb{R}}
\newcommand{\bo}{\mathbb{O}}
\newcommand{\ACal}{\mathcal{A}}
\newcommand{\FCal}{\mathcal{F}}
\newcommand{\norm}[1]{\|{#1} \|}
\newcommand{\calC}{\mathcal{C}}
\newcommand{\softmax}{\mathrm{softmax}}
\begin{document}

\begin{center}

{\bf{\LARGE{On the Complexity of Deterministic Nonsmooth and \\ [.2cm] Nonconvex Optimization}}}

\vspace*{.2in}
{\large{
\begin{tabular}{ccc}
Michael I. Jordan$^{\diamond, \dagger}$ & Tianyi Lin$^\diamond$ & Manolis Zampetakis$^\diamond$
\end{tabular}
}}

\vspace*{.2in}

\begin{tabular}{c}
Department of Electrical Engineering and Computer Sciences$^\diamond$ \\
Department of Statistics$^\dagger$ \\ 
University of California, Berkeley
\end{tabular}

\vspace*{.2in}

\today

\vspace*{.2in}

\begin{abstract}
In this paper, we present several new results on minimizing a nonsmooth and nonconvex function under a Lipschitz condition. Recent work shows that while the classical notion of Clarke stationarity is computationally intractable up to some sufficiently small constant tolerance, the randomized first-order algorithms find a $(\delta, \epsilon)$-Goldstein stationary point with the complexity bound of $\tilde{O}(\delta^{-1}\epsilon^{-3})$, which is independent of dimension $d \geq 1$~\citep{Zhang-2020-Complexity, Davis-2022-Gradient, Tian-2022-Finite}. However, the deterministic algorithms have not been fully explored, leaving open several problems in nonsmooth nonconvex optimization. Our first contribution is to demonstrate that the randomization is \textit{necessary} to obtain a dimension-independent guarantee, by proving a lower bound of $\Omega(d)$ for any deterministic algorithm that has access to both $1^\textnormal{st}$ and $0^\textnormal{th}$ oracles. Furthermore, we show that the $0^\textnormal{th}$ oracle is \textit{essential} to obtain a finite-time convergence guarantee, by showing that any deterministic algorithm with only the $1^\textnormal{st}$ oracle is not able to find an approximate Goldstein stationary point within a finite number of iterations up to sufficiently small constant parameter and tolerance. Finally, we propose a deterministic smoothing approach under the \textit{arithmetic circuit} model where the resulting smoothness parameter is exponential in a certain parameter $M > 0$ (e.g., the number of nodes in the representation of the function), and design a new deterministic first-order algorithm that achieves a dimension-independent complexity bound of $\tilde{O}(M\delta^{-1}\epsilon^{-3})$.  
\end{abstract}

\end{center}

\section{Introduction}
Fixing $d \geq 1$, we define $\br^d$ as a finite-dimensional Euclidean space and assume the function $f: \br^d \mapsto \br$ is $L$-Lipschitz ($|f(\x) - f(\x')| \leq L\|\x - \x'\|$ for all $\x, \x' \in \br^d$) and satisfies  $\min_{\x \in \br^d} f(\x) > -\infty$. Then, our problem of interest is the following nonsmooth and nonconvex optimization problem:
\begin{equation}\label{prob:main}
\min_{\x \in \br^d} f(\x). 
\end{equation}
The theoretical analysis of nonsmooth and nonconvex functions has long been a focus of mathematical research in economics, control theory and computer science~\citep{Clarke-1990-Optimization, Makela-1992-Nonsmooth, Outrata-1998-Nonsmooth}. In recent years, these results have received renewed attention due to their increasing relevance to machine learning, with applications including the training of neural networks with rectified linear units (ReLUs)~\citep{Nair-2010-Rectified, Glorot-2011-Deep}. 

This line of research on nonsmooth nonconvex optimization dates to the introduction of generalized gradients~\citep{Clarke-1974-Necessary, Clarke-1975-Generalized, Clarke-1981-Generalized} and has been subsequently developed from a variety of viewpoints in the optimization community~\citep{Clarke-2008-Nonsmooth, Rockafellar-2009-Variational, Burke-2020-Gradient}. Roughly speaking, the notion of generalized gradients is a natural extension of gradients for smooth optimization and subdifferentials for nonsmooth convex optimization. It is also referred to as the Clarke subdifferential or the set of Clarke subgradients in the literature.

Given the computational intractability of globally minimizing a Lipschitz function $f$ up to a small constant tolerance~\citep{Nemirovski-1983-Problem, Murty-1987-Some, Nesterov-2018-Lectures}, it is natural to turn to considering the problem of finding an $\epsilon$-Clarke stationary point of $f$, meaning some $\x \in \br^d$ such that $\min\{\|\g\|: \g \in \partial f(\x)\} \leq \epsilon$.  For this, the notion of generalized gradients is a key technique. It has been used to define the $\epsilon$-steepest descent direction for a Lipschitz function~\citep{Goldstein-1977-Optimization}, providing the basis for the Goldstein's subgradient method. Subsequently, a wide range of optimization algorithms have been proposed and shown to achieve the \textit{asymptotic} convergence to a Clarke stationary point under some certain regularity conditions~\citep{Kiwiel-1996-Restricted, Burke-2002-Approximating, Burke-2002-Two, Benaim-2005-Stochastic,Burke-2005-Robust, Kiwiel-2007-Convergence, Davis-2020-Stochastic}. However, it is worth remarking that the standard subgradient method fails to asymptotically approach a Clarke stationary point of a Lipschitz function in general~\citep{Daniilidis-2020-Pathological}. For an overview of these aforementioned algorithms for nonsmooth nonconvex optimization and relevant theoretical results, we refer to Section~\ref{sec:related_work}.

A further question is to ask if there exists an algorithm that can achieve finite-time convergence to an $\epsilon$-Clarke stationary point of a Lipschitz function. A \textit{negative} answer to this question was formally stated in~\citet[Theorem~1]{Zhang-2020-Complexity}: for any algorithm that has access to both a function value and a generalized gradient at each query point, there are one-dimensional functions $f_1$ and $f_2$ that do not share any common $\epsilon$-Clarke stationary point for a fixed $\epsilon \in [0, 1)$, such that these two functions can not be distinguished using a finite number of queries. Moreover,  finding a \textit{near} $\epsilon$-Clarke stationary point of a Lipschitz function is proven to be impossible unless the number of queries has an exponential dependence on $d$~\citep{Kornowski-2021-Oracle}. These negative results suggest the need for rethinking the definition of targeted stationary points and motivates us to formulate our goals in terms of a more relaxed yet still meaningful notion of so-called Goldstein subdifferential (or a set of Goldstein stationary points)~\citep{Goldstein-1977-Optimization}. In particular, we consider the problem of finding a $(\delta, \epsilon)$-Goldstein stationary point of $f$ in Eq.~\eqref{prob:main}, meaning some $\x \in \br^d$ such that
\begin{equation}\label{prob:Goldstein}
\min\left\{\|\g\|: \g \in \partial_\delta f(\x)\right\} \leq \epsilon, 
\end{equation}
where the Goldstein subdifferential $\partial_\delta f(\cdot)$ at a point $\x \in \br^d$ is defined by\footnote{We let $\textnormal{conv}(S)$ denote the convex hull of $S$.}
\begin{equation*}
\partial_\delta f(\x) := \textnormal{conv}\left(\cup_{\y: \|\y - \x\| \leq \delta}  \partial f(\y)\right). 
\end{equation*}
This is a fundamental yet largely unexplored research topic. In this context,~\citet{Zhang-2020-Complexity} was the first to propose the randomized variant of Goldstein's subgradient method for minimizing a Hadamard directionally differentiable function and proved a complexity bound of $O(\delta^{-1}\epsilon^{-3})$ in terms of $1^\textnormal{st}$ and $0^\textnormal{th}$ oracles. Subsequently,~\citet{Davis-2022-Gradient} focused on a class of Lipschitz functions and proposed another randomized variant that achieved the same theoretical guarantee. Concurrently,~\citet{Tian-2022-Finite} removed the subgradient selection oracle assumption in~\citet[Assumption~1]{Zhang-2020-Complexity} and provided the third randomized variant of Goldstein's subgradient method that achieved the same complexity guarantee. Very recently,~\citet{Lin-2022-Gradient} have developed a bunch of gradient-free method for minimizing a general Lipschitz function and proved that they yield a complexity bound of $O(d^{3/2}\delta^{-1}\epsilon^{-4})$ in terms of (noisy) $0^\textnormal{th}$ oracles. 

All of the aforementioned algorithms are randomized and their complexity bounds are independent of problem dimension $d \geq 1$ if we are accessible to both $1^\textnormal{st}$ and $0^\textnormal{th}$ oracles. However, it still remains unknown what are lower and upper bounds for finding a $(\delta, \epsilon)$-Goldstein stationary point in deterministic nonsmooth and nonconvex optimization. Accordingly, we have the following open question: 
\begin{quote}
\centering
\textit{Is randomization necessary to obtain the dimension-independent complexity guarantee?} 
\end{quote}
Moreover, the existing algorithms require the access to both $1^\textnormal{st}$ and $0^\textnormal{th}$ oracles due to the use of line search scheme. This is in contrast to smooth nonconvex optimization where the $1^\textnormal{st}$ oracle suffices to ensure the dimension-independent complexity guarantee (e.g., the gradient descent scheme).  This raises another open question: 
\begin{quote}
\centering
\textit{Is the $0^\textnormal{th}$ oracle necessary to obtain the dimension-independent complexity guarantee, or even a finite-time convergence guarantee?} 
\end{quote}

\subsection{Our Contribution} \label{sec:contribution}
This paper presents some new lower and upper bounds on the complexity of finding a $(\delta, \epsilon)$-Goldstein stationary point using deterministic algorithms, thus answering the aforementioned two open problems in the affirmative. Our contributions can be summarized as follows: 
\begin{enumerate}
\item \textbf{Necessity of randomness for dimension-independent complexity bounds (Theorem~\ref{Theorem:DET}).} We prove a dimension-dependent lower bound of $\Omega(d)$ for any deterministic algorithm that has the access to $1^\textnormal{st}$ and $0^\textnormal{th}$ oracles for finding a $(\delta, \epsilon)$-Goldstein stationary point as $\delta, \epsilon > 0$ are small. 

\item \textbf{Necessity of $0^\textnormal{th}$ oracle for deterministic algorithms (Theorem~\ref{Theorem:1LB}).} We prove that the $0^\textnormal{th}$ oracle is essential to obtain a finite-time convergence guarantee by showing that any deterministic algorithm with only the $1^\textnormal{st}$ oracle can not find a Goldstein stationary point within a finite number of iterations up to some small constant tolerance. Moreover, we clarify why the $1^\textnormal{st}$ oracle suffices to obtain finite-time guarantee for randomized algorithms (see Remark \ref{rem:randomized1stOracle}). 
\item \textbf{Deterministic smoothing and algorithm with logarithmic dependence on smoothness (Theorem~\ref{thm:circuitSmoothing} and~\ref{Theorem:SMOOTH}).} We propose a deterministic smoothing approach that induces a smoothness parameter which is exponential in $M > 0$ given that we have a proper arithmetic circuit model that characterizes the structure of $f$. Focusing on such optimization problem with the smoothness parameter $\Theta(2^M)$, we develop a deterministic algorithm with a dimension-independent complexity bound of $\tilde{O}(M\delta^{-1}\epsilon^{-3})$ in terms of both $1^\textnormal{st}$ and $0^\textnormal{th}$ oracles. 
\end{enumerate}
With the aforementioned results, our current knowledge about the complexity of nonsmooth nonconvex optimization is summarized in Table \ref{tbl:summary}.
\begin{table}
\renewcommand{\arraystretch}{2}
\caption{Summary of the theoretical guarantees that are known for nonsmooth nonconvex optimization under a Lipschitz condition. We present the detailed comparison between deterministic algorithms and randomized algorithms and also highlight the key role of $0^\textnormal{th}$ oracle. Here, \textbf{U} stands for upper bounds, i.e., complexities that achieved by existing algorithms, and \textbf{L} stands for lower bounds.}
\label{tbl:summary}
\centering
\begin{tabular}{l|ll|ll} \hline
\textbf{Algorithm class} & \multicolumn{2}{c|}{\textbf{Deterministic}} & \multicolumn{2}{c}{\textbf{Randomized}} \\ \hline
\multirow{2}{*}{$\boldsymbol{1^\textnormal{st}}$ \textbf{oracle}} & \textbf{U:} & ---  & \textbf{U:} & $\tilde{O}(\sqrt{d}\delta^{-1}\epsilon^{-4})$ (Remark \ref{rem:randomized1stOracle}) \\
& \textbf{L:} & $+\infty$ (Theorem \ref{Theorem:1LB}) & \textbf{L:} & $\Omega(\epsilon^{-2})$~\citep{Carmon-2020-Lower}\tablefootnote{This lower bound has been proved for finding an $\epsilon$-stationary point of a smooth and nonconvex function. In smooth optimization,~\citet[Proposition~6]{Zhang-2020-Complexity} shows that the notions of Goldstein stationarity is equivalent to the standard notion of stationarity. Thus, the lower bound is valid for a $(\delta, \epsilon)$-Goldstein stationary in smooth optimization and thus transfers to our setting.} \\ \hline
\multirow{3}{*}{$\boldsymbol{0^\textnormal{st} ~ \& ~  1^\textnormal{st}}$ \textbf{oracles}\tablefootnote{We only consider the high-dimensional case with $d \gg 1$. In one-dimensional case,~\citet{Chewi-2022-Complexity} has proved that the complexity bound for finding an $\epsilon$-stationary point of a smooth and nonconvex function is $O(\log(1/\epsilon))$.}} & \textbf{U:} & $(\delta^{-1} \epsilon^{-1})^{O(d)}$\tablefootnote{This can be achieved by a brute force search over the space where there is certainly a solution using classical techniques.} & \textbf{U:} & $\tilde{O}(\delta^{-1}\epsilon^{-3})$~\citep{Davis-2022-Gradient} \\
& \textbf{U:} & $\tilde{O}(M\delta^{-1}\epsilon^{-3})$\tablefootnote{This algorithm needs the extra assumption that the smoothness parameter is $\Theta(2^M)$ rather than $+\infty$.} (Theorem \ref{Theorem:SMOOTH}) & \textbf{U:} & $\tilde{O}(\delta^{-1}\epsilon^{-3})$~\citep{Tian-2022-Finite} \\
& \textbf{L:} & $\Omega(d)$ (Theorem \ref{Theorem:DET}) & \textbf{L:} & $\Omega(\epsilon^{-2})$~\citep{Carmon-2020-Lower} \\ \hline
\end{tabular}
\end{table}

\section{Related Work}\label{sec:related_work}
To appreciate the difficulty and the scope of research agenda in nonsmooth nonconvex optimization, we start by describing the relevant literature. In this context, the existing research are mostly devoted to establishing the asymptotic convergence of optimization algorithms, including the gradient sampling (GS) method~\citep{Burke-2002-Approximating, Burke-2002-Two, Burke-2005-Robust, Kiwiel-2007-Convergence, Burke-2020-Gradient}, bundle methods~\citep{Kiwiel-1996-Restricted} and subgradient methods~\citep{Benaim-2005-Stochastic, Davis-2020-Stochastic, Daniilidis-2020-Pathological, Bolte-2021-Conservative}. More specifically,~\citet{Burke-2002-Approximating} provided the systematic investigation of approximating a generalized gradient through a simple yet novel random sampling scheme, motivating the subsequent development of celebrated gradient bundle method~\citep{Burke-2002-Two}. Then,~\citet{Burke-2005-Robust} and~\citet{Kiwiel-2007-Convergence} proposed the modern GS method by incorporating key modifications into the scheme of the aforementioned gradient bundle method and proved that any cluster point of the iterates generated by the GS method is a Clarke stationary point. For an overview of GS methods, we refer to~\citet{Burke-2020-Gradient}. It is worth mentioning that~\citet{Kiwiel-1996-Restricted} generalized the bundle method to nonsmooth nonconvex optimization by using an affine model that embeds downward shifts. 

There has been recent progress in the investigation of different subgradient methods for nonsmooth nonconvex optimization. It was shown by~\citet{Daniilidis-2020-Pathological} that the standard subgradient method fails to find any Clarke stationary point of a Lipschitz function, as witnessed by the existence of pathological examples.~\citet{Benaim-2005-Stochastic} established the asymptotic convergence guarantee of stochastic approximation methods from a differential inclusion point of view under additional conditions and~\citet{Bolte-2021-Conservative} justified \textit{automatic differentiation} as used in deep learning.~\citet{Davis-2020-Stochastic} proved the asymptotic convergence of subgradient methods if the objective function is assumed to be Whitney stratifiable. Turning to nonasymptotic convergence guarantee,~\citet{Zhang-2020-Complexity} proposed a randomized variant of Goldstein's subgradient method and proved a dimension-independent complexity bound of $O(\delta^{-1}\epsilon^{-3})$ for finding a $(\delta, \epsilon)$-Goldstein stationary point of a Hadamard directionally differentiable function. For a more broad class of Lipschitz functions,~\citet{Davis-2022-Gradient} and~\citet{Tian-2022-Finite} have proposed two other randomized variants of Goldstein's subgradient method and proved the same complexity guarantee. Comparing to their randomized counterparts, the deterministic algorithms are relatively scarce in nonsmooth nonconvex optimization. 

In contrast to the algorithmic progress, the study of algorithm-independent lower bounds for finding any stationary point in nonsmooth nonconvex optimization remains unexplored. In convex optimization, we have a deep understanding of the complexity of finding an $\epsilon$-optimal point (i.e., $\x \in \br^d$ satisfying that $f(\x) - \min_{\x \in \br^d} f(\x) \leq \epsilon$)~\citep{Nemirovski-1983-Problem, Guzman-2015-Lower, Braun-2017-Lower, Nesterov-2018-Lectures}. In smooth nonconvex optimization, various lower bounds have been established for finding an $\epsilon$-stationary point (i.e., $\x \in \br^d$ satisfying that $\|\nabla f(\x)\| \leq \epsilon$)~\citep{Vavasis-1993-Black, Nesterov-2012-Make, Carmon-2020-Lower, Carmon-2021-Lower}. Further extensions to nonconvex stochastic optimization were given in~\citet{Arjevani-2020-Second, Arjevani-2022-Lower} and the algorithm-specific lower bounds for finding an $\epsilon$-stationary point were derived in~\citet{Cartis-2010-Complexity, Cartis-2012-Complexity, Cartis-2018-Worst}. However, these proof techniques can not be extended to nonsmooth nonconvex optimization due to different optimality notions. In this vein,~\citet{Zhang-2020-Complexity} and~\citet{Kornowski-2021-Oracle} have demonstrated that neither an $\epsilon$-Clarke stationary point nor a near $\epsilon$-Clarke stationary point can be obtained in a $\textnormal{poly}(d, \epsilon^{-1})$ number of queries when $\epsilon > 0$ is smaller than some constant. Our analysis is inspired by their construction and techniques but focus on establishing lower bounds for finding a $(\delta, \epsilon)$-Goldstein stationary point.   

The smoothing viewpoint starts with~\citet[Theorem~9.7]{Rockafellar-2009-Variational}, which states that any approximate Clarke stationary point of a Lipschitz function is the asymptotic limit of appropriate approximate stationary points of smooth functions. In particular, given a Lipschitz function $f$, we try to construct a smooth function $\tilde{f}$ that is $\delta$-close to $f$ (i.e., $\|f-g\|_\infty \leq \delta$), and apply a smooth optimization algorithm on $\tilde{f}$. Such smoothing approaches have been used in convex optimization~\citep{Nesterov-2005-Smooth, Beck-2012-Smoothing} and found the application in structured nonconvex optimization~\citep{Chen-2012-Smoothing}. For a general Lipschitz function,~\citet{Duchi-2012-Randomized} proposed a randomized smoothing approach that can transform the original problem to a smooth nonconvex optimization where the objective function is given in the expectation form and the smoothness parameter has to be dimension-dependent. Moreover, we are aware of deterministic smoothing approaches that yield dimension-independent smoothness parameters but emphasize that they are computationally intractable~\citep{Lasry-1986-Remark, Attouch-1993-Approximation}. Recently,~\citet{Kornowski-2021-Oracle} have explored the trade-off between computational tractability and smoothing, ruling out the existence of any (possibly randomized) smoothing approach that achieves computational tractability and dimension-independent smoothness parameter. However, their analysis does not give any concrete deterministic smoothing approach under a Lipschitz condition. 

\section{Preliminaries and Technical Background}\label{sec:prelim}
We start by presenting our notation and providing the formal definitions for the class of functions and the optimality condition (a relaxation of standard stationarity) considered in this paper. We also set up the algorithm class and the notions of complexity measures which will be used to derive a lower bound for any deterministic algorithm in nonsmooth nonconvex optimization. 

\paragraph{Notation.} We let $[d] := \{1, 2, \ldots, d\}$ denote the set of positive integers less than or equal to $d \geq 1$ and define $\br^d$ as the set of $d$-dimensional vectors in the Euclidean space. In particular, we let $\one_d \in \br^d$ and $\zero_d \in \br^d$ be the vectors whose entries are all ones and zeros, respectively. We let $e_1, e_2, \ldots, e_d$ denote a sequence of standard basis vectors and let $I_d \in \br^{d \times d}$ be the $d \times d$ identity matrix. For a vector $\x \in \br^d$, its Euclidean norm refers to $\|\x\|$ and its $i^\textnormal{th}$ coordinate refers to $x_i$. We denote $\supp\{\x\} = \{i \in [d]: x_i \neq 0\}$ as the support of $\x$ (i.e., a collection of nonzero indices).  For a set $\XCal \subseteq \br^d$, we let $\textnormal{conv}(\XCal)$ denote its convex hull. For a continuous function $f(\cdot): \br^d \mapsto \br$, we let $\nabla f(\x)$ denote the gradient of $f$ at $\x$ (if it exists at this point). For a scalar $a \in \br$, we let $\lfloor a \rfloor$ and $\lceil a\rceil$ be the smallest integer that is larger than $a$ and the largest integer that is smaller than $a$. In addition, we define $\rho$-neighborhood of a point $\x \in \br^d$ by $\BB_R(\x) := \{\y \in \br^d: \|\y - \x\| \leq R\}$. Finally, we use the standard notation, with $O(\cdot)$, $\Theta(\cdot)$ and $\Omega(\cdot)$ to hide the absolute constants that do not depend on problem parameters, and $\tilde{O}(\cdot)$, $\tilde{\Theta}(\cdot)$ and $\tilde{\Omega}(\cdot)$ to hide the absolute constants and additional logarithmic factors.

\subsection{Function classes}
Imposing the reasonable regularity conditions to the objective functions is pivotal for the development of a meaningful complexity theory for various optimization algorithms~\citep{Nemirovski-1983-Problem}. A minimal set of conditions that have become standard for nonsmooth and nonconvex optimization are Lipschitzian properties of function values and bounds on function values.  

We first review the definition of Lipschitzian properties of function values. A function $f: \br^d \mapsto \br$ is said to be $L$-Lipschitz if for every $\x \in \br^d$ and the direction $\sv \in \br^d$ with $\|\sv\| \leq 1$, the directional projection $f_{\x, \sv}(t) := f(\x + t\sv)$ satisfies 
\begin{equation*}
|f_{\x, \sv}(t_1) - f_{\x, \sv}(t_2)| \leq L|t_1 - t_2|,  \quad \textnormal{for all } t_1, t_2 \in \br. 
\end{equation*}
Equivalently,  the function $f$ is said to be $L$-Lipschitz if the following statement holds true, 
\begin{equation*}
|f(\x_1) - f(\x_2)| \leq L\|\x_1 - \x_2\|, \quad \textnormal{for all } \x_1, \x_2 \in \br^d. 
\end{equation*}
The key quantity $f(\x^0) - \inf_{\x \in \br^d} f(\x)$ appears in the complexity bound for optimization algorithms in both convex and nonconvex optimization~\citep{Nesterov-2018-Lectures}, where $\x^0 \in \br^d$ is a an initial point for the algorithm. It is often assumed that $f(\x^0) - \inf_{\x \in \br^d} f(\x) \leq \Delta$ where $\Delta > 0$ is a dimension-independent constant. Fixing $\x^0 = \zero_d$ (without loss of generality), we consider the following classes of functions throughout this paper. 
\begin{definition}\label{def:function_class}
Suppose that the problem parameters $\Delta, L > 0$ are independent of the dimension $d \geq 1$. Then, we let $\FCal_d(\Delta, L)$ denote the set of $L$-Lipschitz functions $f: \br^d \mapsto \br$ with bounded function value gap; that is, $f(\zero_d) - \inf_{\x \in \br^d} f(\x) \leq \Delta$. 
\end{definition}
We see from Definition~\ref{def:function_class} that $\FCal_d(\Delta, L)$ is defined in a different manner from~\citet[Definition~1]{Carmon-2020-Lower}. In particular, we do not impose any smoothness condition on the functions $f \in \FCal_d(\Delta, L)$, in contrast to the functions considered in~\citet[Definition~1]{Carmon-2020-Lower} which are assumed to be infinitely differentiable. Notably, our class is defined for a \textit{fixed and finite} $d$ (but can be very large). In contrast, the function class in~\cite[Definition~1]{Carmon-2020-Lower} includes smooth functions on $\br^d$ for any dimension $d \geq 1$. Indeed, their construction followed the established \textit{dimension-independent} complexity guarantee in convex optimization~\citep{Nemirovski-1983-Problem}, demonstrating that the lower bound for finding an $\epsilon$-stationary point is independent of the dimension $d \geq 1$ in smooth and nonconvex optimization if $d$ is sufficiently large such that $d = \Omega(\epsilon^{-1})$.  

The goal of our paper is to show that the dimension-independent complexity guarantee never holds true in \textit{deterministic} nonsmooth and nonconvex optimization by providing a \textit{dimension-dependent} lower bound regardless of the relationship between $d$ and $\epsilon$. In fact, we intend to prove that any deterministic algorithm requires at least $\Omega(d)$ number of queries for finding a $(\delta, \epsilon)$-Goldstein stationary point (see Definition~\ref{def:Goldstein}) when $\delta, \epsilon > 0$ are smaller than \textit{some constant}. As such, it suffices to consider a function class with a fixed and finite dimension $d \geq 1$. The proof is simple yet nontrivial: for any deterministic algorithm, we can construct a hard function $f: \br^d \mapsto \br$ such that the required number of queries is at least $\Omega(d)$ when $\delta, \epsilon > 0$ are smaller than some constant (see Section~\ref{sec:lower} for the details). 

\subsection{Generalized gradients and stationary points}
We start with the definition of generalized gradients~\citep{Clarke-1990-Optimization} for nondifferentiable functions. This is perhaps the most natural extension of gradients to nonsmooth and nonconvex functions. 
\begin{definition}
Given a point $\x \in \br^d$ and a direction $\sv \in \br^d$, the generalized directional derivative of a nondifferentiable function $f$ is given by 
\begin{equation*}
D f(\x; \sv) := \limsup_{\y \rightarrow \x, t \downarrow 0} \tfrac{f(\y + t\sv) - f(\y)}{t}. 
\end{equation*}
The generalized gradient of $f$ is defined as $\partial f(\x) := \{\g \in \br^d: \g^\top \sv \leq D f(\x; \sv) \textnormal{ for all } \sv \in \br^d\}$. 
\end{definition} 
We shall see that there are several equivalent ways of defining a generalized gradient of a Lipschitz function without the use of generalized directional derivatives. One alternate hinges upon Rademacher's Theorem~\citep[Section 3.1.2]{Evans-2018-Measure}, which asserts that the gradient of a Lipschitz function exists almost everywhere, and provides a simple and intuitive characterization: $\partial f(\x)$ is the convex hull of limit points of $\nabla f(\x_k)$ over all sequences $\{\x_k\}_{k \geq 1}$ of differentiable points of $f(\cdot)$ that converge to $\x$. Formally, we summarize some basic properties of generalized gradient of a Lipschitz function and refer the interested readers to~\citet[Section~2]{Clarke-1990-Optimization} for proof details. 
\begin{proposition}
Suppose that a function $f$ is $L$-Lipschitz for some $L > 0$, we have 
\begin{equation*}
\partial f(\x) := \textnormal{conv}\left\{\g \in \br^d: \g = \lim_{\x_k \rightarrow \x} \nabla f(\x_k)\right\}. 
\end{equation*}
Moreover, we have: (i) $\partial f(\x)$ is an nonempty, convex and compact set with $\|\g\| \leq L$ for all $\g \in \partial f(\x)$; (ii) $\partial f(\cdot)$ is an upper-semicontinuous set valued map; and (iii) the mean-value theorem holds: for any $\x_1, \x_2 \in \br^d$, there exists $\lambda \in (0, 1)$ and $\g \in \partial f(\lambda\x_1 + (1-\lambda)\x_2)$ such that $f(\x_1) - f(\x_2) = \g^\top(\x_1 - \x_2)$. 
\end{proposition}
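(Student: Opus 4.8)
The plan is to take the Clarke directional derivative $Df(\x;\cdot)$ from the previous definition as primitive and to derive both the gradient-limit formula and properties (i)--(iii) from it, with Rademacher's theorem supplying the only genuinely measure-theoretic input. Write $G(\x) := \{\g \in \br^d : \g = \lim_{k\to\infty}\nabla f(\x_k) \text{ for some sequence } \x_k \to \x \text{ of differentiability points of } f\}$, so that the asserted identity reads $\partial f(\x) = \conv(G(\x))$. I would first dispose of the routine structural facts: $Df(\x;\sv) \le L\|\sv\|$ by the Lipschitz property, $\sv \mapsto Df(\x;\sv)$ is positively homogeneous and subadditive, and $\x \mapsto Df(\x;\sv)$ is upper semicontinuous (being a $\limsup$); consequently $Df(\x;\cdot)$ is the support function of the compact convex set $\partial f(\x)$, which is the fact everything else is measured against.

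For the identity, the inclusion $\conv(G(\x)) \subseteq \partial f(\x)$ is the easy direction: at any differentiability point $\x_k$ one has $\nabla f(\x_k)^\top\sv \le Df(\x_k;\sv)$ for every $\sv$, so letting $\x_k \to \x$ and using upper semicontinuity of $Df(\cdot;\sv)$ gives $\g^\top\sv \le Df(\x;\sv)$ for each $\g \in G(\x)$, and convexity of $\partial f(\x)$ sweeps in the convex hull. The reverse inclusion $\partial f(\x) \subseteq \conv(G(\x))$ is the heart of the matter, and I would argue it by separation. First, $G(\x)$ is bounded by $L$ (the Lipschitz bound on gradients) and closed (a diagonal argument), so $K := \conv(G(\x))$ is compact; if some $\g \in \partial f(\x)$ lay outside $K$, a separating hyperplane would yield a direction $\sv$ and $\eta > 0$ with $\g^\top\sv \ge \alpha + \eta$ where $\alpha := \max_{\w \in K}\w^\top\sv$, and it would then suffice to prove $Df(\x;\sv) \le \alpha$ since that contradicts $\g^\top\sv \le Df(\x;\sv)$. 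To get this bound I would show that for every $\eta' > 0$ there is $r > 0$ with $\nabla f(\y)^\top\sv \le \alpha + \eta'$ at every differentiability point $\y$ within distance $r$ of $\x$ --- otherwise a sequence of such points, having uniformly bounded gradients, would admit a subsequential gradient limit in $G(\x)$ that violates the definition of $\alpha$ --- and then integrate along lines in the direction $\sv$: since $s \mapsto f(\y + s\sv)$ is Lipschitz, hence absolutely continuous, and by Fubini applied to the null set of nondifferentiability its derivative equals $\nabla f(\y + s\sv)^\top\sv$ for a.e.\ $s$ at a.e.\ $\y$, one obtains $f(\y + t\sv) - f(\y) \le (\alpha + \eta')t$ first for a.e.\ $\y$ near $\x$ and small $t > 0$ and then for all such $\y$ by continuity; dividing by $t$ and taking the $\limsup$ gives $Df(\x;\sv) \le \alpha + \eta'$, and $\eta' \downarrow 0$ closes the argument. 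This Fubini-and-absolute-continuity link between $Df$ and limiting gradients is the step I expect to be the main obstacle; the rest is bookkeeping.

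Granting the identity, (i) follows at once: $G(\x)$ is nonempty because Rademacher's theorem produces differentiability points accumulating at $\x$ whose uniformly bounded gradients have a convergent subsequence; the bound $\|\g\| \le L$ survives limits and convex combinations; convexity is by construction; and compactness is boundedness together with closedness of $\conv(G(\x))$, the convex hull of a compact subset of $\br^d$. For (ii), I would use the directional-derivative description directly: given $\x_k \to \x$, $\g_k \in \partial f(\x_k)$ with $\g_k \to \g$, one has $\g_k^\top\sv \le Df(\x_k;\sv)$ for all $\sv$, and passing to the limit with upper semicontinuity of $Df(\cdot;\sv)$ yields $\g^\top\sv \le Df(\x;\sv)$ for all $\sv$, i.e.\ $\g \in \partial f(\x)$; since $\partial f$ also has compact values and a uniform local bound $L$, this is precisely upper semicontinuity of the set-valued map.

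For the mean-value statement (iii), set $h(\lambda) := f(\x_2 + \lambda(\x_1 - \x_2)) + \lambda\big(f(\x_2) - f(\x_1)\big)$ on $[0,1]$, which is Lipschitz with $h(0) = h(1) = f(\x_2)$. If $h$ is constant take $\lambda_0 = \tfrac12$; otherwise $h$ attains an interior maximum or minimum at some $\lambda_0 \in (0,1)$, and the generalized Fermat rule --- a local extremum of a one-variable Lipschitz function has $0$ in its Clarke subdifferential, since both one-sided generalized derivatives $Dh(\lambda_0;\pm1)$ then have the sign that forces the interval $\partial h(\lambda_0) \subseteq \br$ to contain $0$ --- gives $0 \in \partial h(\lambda_0)$. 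Combining this with the affine-composition-plus-sum rule $\partial h(\lambda) \subseteq \{\g^\top(\x_1 - \x_2) + f(\x_2) - f(\x_1) : \g \in \partial f(\x_2 + \lambda(\x_1 - \x_2))\}$, itself a direct consequence of the definition of $D$ for the composed map, produces some $\g \in \partial f(\lambda_0\x_1 + (1-\lambda_0)\x_2)$ with $\g^\top(\x_1 - \x_2) + f(\x_2) - f(\x_1) = 0$, which is the assertion. Throughout I would cite Clarke's monograph for the one-line verifications of the Fermat rule and the composition rule rather than reproving them.
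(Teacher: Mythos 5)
The paper does not actually prove this Proposition: it states the properties and simply refers the reader to Section~2 of Clarke's monograph. Your outline reproduces, essentially verbatim, the standard arguments from that reference (the support-function/separation argument via Rademacher and Fubini for the gradient-limit identity, and Lebourg's Rolle-plus-chain-rule argument for the mean-value statement), and it is correct; it is more detailed than anything the paper offers but is not a genuinely different route, since it is precisely the proof the paper is implicitly pointing to.
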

With the notion of generalized gradient, we say $\x \in \br^d$ is a Clarke stationary point of $f$ if it satisfies that $\zero_d \in \partial f(\x)$. Accordingly, a point $\x \in \br^d$ is said to be an $\epsilon$-Clarke stationary if $\min \{\|\g\|: \g \in \partial f(\x)\} \leq \epsilon$. Then, it is natural to ask if one can derive the lower bound for finding an $\epsilon$-stationary point as an analog of~\citet{Carmon-2020-Lower}. In this context, this question was formally addressed in~\citet[Theorem~1]{Zhang-2020-Complexity} that finding an $\epsilon$-Clarke stationary point in nonsmooth and nonconvex optimization can not be achieved in finite time given a fixed tolerance $\epsilon \in [0, 1)$. 

A straightforward relaxation of $\epsilon$-Clarke stationarity is near $\epsilon$-Clarke stationarity. In particular, we consider a point that is $\delta$-close to an $\epsilon$-stationary point for some $\delta > 0$; that is, a point $\x \in \br^d$ is near $\epsilon$-stationary if $\min\{\|\g\|: \g \in \cup_{\y \in \BB_\delta(\x)}  \partial f(\y)\} \leq \epsilon$. However, the recent result of~\cite[Theorem~1]{Kornowski-2021-Oracle} ruled out its computationally tractability by showing that the required number of queries for finding a near $\epsilon$-Clarke stationary point of $f \in \FCal_d(\Delta, L)$ has an exponential dependence on the dimension $d \geq 1$ when $\epsilon, \delta > 0$ are smaller than some constants. 

These negative results suggest the need for further relaxing the targeted stationarity while keeping the close relationship between the relaxed one and Clarke stationarity. Currently, most of the approaches are contingent on the following celebrated notion of Goldstein subdifferential~\citep{Goldstein-1977-Optimization}.
\begin{definition}\label{def:Goldstein}
Given a point $\x \in \br^d$ and $\delta > 0$, the $\delta$-Goldstein subdifferential of a Lipschitz function $f$ at $\x$ is defined as $\partial_\delta f(\x) := \textnormal{conv}(\cup_{\y \in \BB_\delta(\x)}  \partial f(\y))$. 
\end{definition}
The Goldstein subdifferential of $f$ at $\x$ stands for a convex hull of the union of generalized gradients at each point in a $\delta$-neighborhood of $\x$. Then, we define the $(\delta, \epsilon)$-Goldstein stationary points properly; that is, a point $\x \in \br^d$ is a $(\delta, \epsilon)$-Goldstein stationary point if 
\begin{equation*}
\min \left\{\|\g\|: \g \in \partial_\delta f(\x) \right\} \leq \epsilon. 
\end{equation*}
The $(\delta, \epsilon)$-Goldstein stationarity is generally weaker than $\epsilon$-Clarke stationarity since any $\epsilon$-Clarke stationary point is a $(\delta, \epsilon)$-Goldstein stationary point but not vice versa. However, these two notions are equivalent if $f$ is assumed to be smooth~\cite[Proposition~6]{Zhang-2020-Complexity}. Moreover,~\citet[Lemma~7]{Zhang-2020-Complexity} shows that $\lim_{\delta \downarrow 0} \partial_\delta f(\x) = \partial f(\x)$, enabling an feasible framework for transforming nonasymptotic results for finding a $(\delta, \epsilon)$-Goldstein stationary point (if exists) to asymptotic results for finding a Clark stationary point in the literature~\citep{Burke-2020-Gradient}.  Therefore, the $(\delta, \epsilon)$-Goldstein stationarity serves as a reasonable optimality criterion for deriving the finite-time convergence guarantee of the algorithms in nonsmooth and nonconvex optimization. 
\begin{remark}
Finding a $(\delta, \epsilon)$-Goldstein stationary point in nonsmooth nonconvex optimization is computationally tractable when $f$ is Lipschitz and the nonasymptotic analysis has been already done for randomized algorithms~\citep{Davis-2022-Gradient, Tian-2022-Finite}. In particular, they proposed the randomized variants of Goldstein's subgradient method and proved the complexity bound of $\tilde{O}(\delta^{-1}\epsilon^{-3})$.  Nonetheless, it still remains unknown what lower and upper bounds for finding a $(\delta, \epsilon)$-Goldstein stationary point in deterministic nonsmooth and nonconvex optimization are. 
\end{remark}

\subsection{Algorithm class and complexity measures}
We present the proper definition of the class of optimization algorithms considered in this paper.  Since the dimension $d \geq 1$ is fixed and finite, an \textit{algorithm} $\algo$ is defined to map each function $f: \br^d \mapsto \br$ to \textit{the sequence of iterates} in $\br^d$; indeed, we let $\algo[f] = \{\x^t\}_{t \geq 0} \subseteq \br^d$ denote the sequence of iterates that the algorithm $\algo$ generates when operating on $f$. 

The general framework we consider to measure the complexity of finding a $(\delta, \epsilon)$-Goldstein stationary point in nonsmooth nonconvex optimization is the classical information-based oracle model~\citep{Nemirovski-1983-Problem}, where the algorithm $\algo$ has the access to the function $f \in \FCal_d(\Delta, L)$ only by querying a local oracle $\bo_f$ such that 
\begin{equation*}
\x^t = \algo^{(t)}(\x^0, \bo_f(\x^0), \bo_f(\x^1), \ldots, \bo_f(\x^{t-1})), 
\end{equation*}
The local oracle $\bo_f$ means that the information the oracle returns about a function $f_1$ when queried at a point $\x$ is identical to that it returns when a function $f_2$ is queried at $\x$ whenever $f_1(\z) = f_2(\z)$ for all $\z \in \BB_\rho(\x)$ with some $\rho > 0$. A typical example is $\bo_f(\x) = (f(\x), \g)$ where $f(\x)$ is the function value and $\g \in \partial f(\x)$ is chosen as any Clarke subgradient of $f$ at $\x$ without taking the global information of $f$ into account. This requirement of locality allows us to rule out many unnatural situations and is widely accepted in the literature~\citep{Braun-2017-Lower, Kornowski-2021-Oracle}. Any deterministic algorithm that is accessible to $1^\textnormal{st}$ and $0^\textnormal{th}$ oracles sequentially queries the iterates using the local oracle $\bo_f(\x) = (f(\x), \g)$ and use this information to pursue a $(\delta, \epsilon)$-Goldstein stationary point. 

We let $\ACal_{\det}$ be the class of deterministic algorithms that are accessible to $1^\textnormal{st}$ and $0^\textnormal{th}$ oracles and let $\ACal_{\zr}$ be the subclass of $\ACal_{\det}$ where all the algorithms are \textit{zero-respecting}. Note that $\ACal_{\zr}$ is important to proving lower bounds in nonconvex optimization~\citep{Carmon-2020-Lower, Carmon-2021-Lower} since it is not only small enough to perform poorly on a single function uniformly, but large enough to imply lower bounds on the algorithm class $\ACal_{\det}$.  Formally, the algorithm $\algo$ is zero-respecting if for any $f: \br^d \mapsto \br$, the iterate sequence $\algo[f] = \{\x^t\}_{t = 1}^\infty$ satisfies that $\supp\{\x^t\} \subseteq \cup_{s < t} \supp\{\g^s\}$ for each $t \geq 1$, where $\g^s \in \partial f(\x^s)$ is a Clarke subgradient used in the algorithm $\algo$. Notably, the above definition is equivalent to the requirements that (i) $\x^0 = \zero_d$ and (ii) for every $t \geq 1$ and $j \in [d]$, if $g_j^s = 0$ for $s < t$, then $x_j^t = 0$.  Informally speaking, an algorithm $\algo$ is zero-respecting if it never explores coordinates that appear not to affect the function.

With the above notions in hand, we are ready to formalize the key notion of complexity measures: \textit{what is the best performance that a deterministic algorithm in $\ACal$ can achieve for all the functions in $\FCal$?} A natural performance measure is the number of queries required to find a $(\delta, \epsilon)$-Goldstein stationary point $\x \in \br^d$. Formally, given a deterministic sequence $\{\x^t\}_{t \geq 0}$, we define its \textit{complexity} on $f$ by
\begin{equation*}
\bound_{\delta, \epsilon}(\{\x^t\}_{t \geq 0}, f) := \inf\left\{t \geq 0: \min \left\{\|\g\|: \g \in \partial_\delta f(\x^t) \right\} < \epsilon\right\}. 
\end{equation*}
To measure the performance of an algorithm $\algo$ on a function $f$, we evaluate the iterates that $\algo$ produces from $f$, and with abuse of notation, we define $\bound_{\delta, \epsilon}(\algo, f) := \bound_{\delta, \epsilon}(\algo[f], f)$ as the complexity of $\algo$ on $f$. As such, we can define the complexity of algorithm class $\ACal_{\det}$ on function class $\FCal_d(\Delta, L)$ as 
\begin{equation}\label{def:bound}
\TCal_{\delta, \epsilon}(\ACal_{\det}, \FCal_d(\Delta, L)) := \inf_{\algo \in \ACal_{\det}} \sup_{f \in \FCal_d(\Delta, L)} \bound_{\delta, \epsilon}(\algo, f). 
\end{equation}
Given these definitions, we ask whether or not it is possible to prove a dimension-dependent lower bound of $\Omega(d)$ for $\TCal_{\delta, \epsilon}(\ACal_{\det}, \FCal_d(\Delta, L))$ when $\delta, \epsilon > 0$ are smaller than some universal constants. This is clearly an important question in nonconvex optimization but has still remained open to our knowledge. 

We answer the above question in the affirmative, demonstrating the importance of randomization in obtaining the \textit{dimension-independent} complexity guarantee in terms of Goldstein stationarity. The key step in our proofs is to exhibit a hard function $f$ and bound the quantity of $\inf_{\algo \in \ACal_{\zr}} \bound_{\delta, \epsilon}(\algo, f)$ from below (see Section~\ref{sec:lower} for the details). 

\section{Dimension-Dependent Lower Bound}\label{sec:lower}
In this section, we prove the dimension-dependent lower bounds for deterministic algorithms that are accessible to both $1^\textnormal{st}$ and $0^\textnormal{th}$ oracles in nonsmooth and nonconvex optimization. The proof here is based on the modification of a hard function in~\citet{Kornowski-2022-Complexity} and the application of classical techniques~\citep{Nemirovski-1983-Problem, Carmon-2020-Lower, Carmon-2021-Lower} with $(\delta, \epsilon)$-Goldstein stationarity. 

\subsection{Overview of classical techniques}
As a warm-up, we review the classical techniques for proving lower bounds for zero-respecting algorithms in nonsmooth and convex optimization. Following~\citet[Chapter 3.1.2]{Nesterov-2018-Lectures}, we fix the dimension-independent parameters $R > 0$, $L > 0$ and $\epsilon > 0$ and assume that the dimension $d$ is sufficiently large such that $d \geq \lfloor 10L^2R^2\epsilon^{-2}\rfloor + 1$. For any zero-respecting algorithms with $\x^0 = \zero_d$, our goal is to construct a nonsmooth and convex function $f$ satisfying that (i) $\|\x^0 - \x^\star\| \leq R$ where $\x^\star$ is the unique global minimum; (ii) $f$ is $L$-Lipschitz over $\BB_R(\x^\star)$; and (iii) $f(\x^t) - f(\x^\star) \geq \epsilon$ for all $t \leq \lfloor\frac{L^2R^2\epsilon^{-2}}{36} \rfloor$~\footnote{It is reasonable to assume that $\frac{L^2R^2\epsilon^{-2}}{36} \geq 1$. If this does not hold, we will see later that $f(\zero_d) - \min_{\x \in \br^d} f(\x) = \frac{LR}{6} \leq \epsilon$ for the function $f$ defined in Eq.~\eqref{example:convex}. This means that a trivial $\epsilon$-optimal solution exists and no optimization is needed.}.  

For simplicity, we let $T = \lfloor \frac{L^2R^2\epsilon^{-2}}{36}\rfloor + 1$ and define the single hard function as follows, 
\begin{equation}\label{example:convex}
f(\x) = \tfrac{L}{3} \max_{1 \leq i \leq T} \{x_i\} + \tfrac{L}{6R\sqrt{T}}\|\x\|^2. 
\end{equation} 
Since $d \geq \lfloor 10L^2R^2\epsilon^{-2}\rfloor + 1$, we have $2 \leq T < d$ and the above function in Eq.~\eqref{example:convex} is well defined. Since $f$ is strongly convex, the global minimum of $f$ is unique and we let this point denote $\x^\star$. By definition, we have $\zero_d \in \partial f(\x^\star)$ and $\partial f(\x) = \frac{L}{3R\sqrt{T}} \x + \frac{L}{3} \cdot \textnormal{conv}(\{e_j: j = \argmax_{1 \leq i \leq T} x_i\})$. Then, the following statements hold true, 
\begin{equation*}
f(\x^\star) = -\tfrac{LR}{6\sqrt{T}}, \qquad x_i^\star = \left\{
\begin{array}{cl} -\tfrac{R}{\sqrt{T}}, & \textnormal{if } 1 \leq i \leq T,  \\ 0, & \textnormal{otherwise}.
\end{array}\right. .
\end{equation*}
We are ready to verify (i) and (ii) as desired. Indeed, for (i), we have $\|\x^0 - \x^\star\| = \|\x^\star\| = \sqrt{\sum_{i=1}^T \frac{R^2}{T}} = R$. For (ii), let $\x, \x' \in \BB_R(\x^\star)$, we have $f(\x) - f(\x') \leq \|\xi\|\|\x - \x'\|$ for any $\xi \in \partial f(\x)$. By the definition of $\partial f(\x)$, we have $\|\xi\| \leq \frac{L}{3R\sqrt{T}}\|\x\| + \frac{L}{3} \leq \frac{L}{3R\sqrt{T}}(\|\x^\star\| + R) + \frac{L}{3} \leq L$ for any $\xi \in \partial f(\x)$ and $\x \in \BB_R(\x^\star)$. Thus, $f$ is $L$-Lipschitz over $\BB_R(\x^\star)$.  

It suffices to verify (iii). Since the function $f$ is fixed and the local oracle $\bo_f(\x)$ returns the function value $f(\x)$ and \textit{any} Clarke subgradient $ \g \in \partial f(\x)$ without taking the global information into account, we can let $\bo_f(\x)$ provide the most informative subgradient at a query point; indeed, we receive $\frac{L}{3}e_{i^\star} + \frac{L}{3R\sqrt{T}}\x$ from $\bo_f(\x)$ where $i^\star \leq d$ is the smallest index so that $x_{i^\star} = \max_{1 \leq i \leq T} x_i$. Since $\x^0 = \zero_d$, we have $f(\x^0) = 0$ and $\g^0 = \frac{L}{3} e_1$.  For any zero-respecting algorithm,  we have $\supp\{\x^1\} \subseteq \supp\{\g^0\} = \{1\}$ which implies that $x_i^1 = 0$ for all $2 \leq i \leq d$. As such, we have $\supp\{\g^1\} \subseteq \{1, 2\}$ and $\supp\{\x^2\} \subseteq \{1, 2\}$ which implies that $x_i^2 = 0$ for all $3 \leq i \leq d$.  Repeating this argument, we have $x_i^t = 0$ for all $t+1 \leq i \leq d$; that is, we only \textit{discover} a single new coordinate at each iteration (using a query). Consequently, for all $1 \leq t \leq \lfloor \frac{L^2R^2\epsilon^{-2}}{36} \rfloor < T$, we have $x_T^t = 0$ and thus 
\begin{equation*}
f(\x^t) \geq \tfrac{L}{3} \max_{1 \leq i \leq T} x_i^t \geq 0. 
\end{equation*}
This implies that $f(\x^t) - f(\x^\star) \geq \tfrac{LR}{6\sqrt{T}} \geq \epsilon$. Putting these pieces together yields the desired result. 

\subsection{Lower bound for zero-respecting algorithms}
We proceed to prove a lower bound for finding a $(\delta, \epsilon)$-Goldstein stationary point of $f \in \FCal_d(\Delta, L)$ using a class of zero-respecting algorithms in $\ACal_{\zr}$. In particular, we fix the dimension-independent parameters $\Delta > 0$, $L > 0$, $\delta > 0$ and $\epsilon > 0$ and the dimension $d \geq 2$ is assumed to be both fixed and finite\footnote{The case of $d = 1$ has been studied in~\citet{Zhang-2020-Complexity}. Since our goal is to derive the dependence of lower bounds on $d$, we simply assume that $d \geq 2$ is finite (without loss of generality).}. For any zero-respecting algorithms with $\x^0 = \zero_d$, our goal is to construct a nonsmooth and nonconvex function $f$ satisfying that (i) $f(\x^0) - f^\star \leq \Delta$ where $f^\star = \min_{\x \in \br^d} f(\x)$ is the global optimal function value; (ii) $f$ is $L$-Lipschitz over $\br^d$; and (iii) $\min \{\|\g\|: \g \in \partial_\delta f(\x^t)\} \geq \epsilon$ for all $t \geq 1$\footnote{Our impossibility result here implies that any algorithm in $\ACal_{\zr}$ can not find a $(\delta, \epsilon)$-Goldstein stationary point within the finite number of iterations. The same result has been obtained by~\citet{Tian-2022-No} using different hard instances. }. 

Fixing $d \geq 2$ and letting the dimension-independent parameters be $\Delta > 0$, $L > 0$, $0 < \delta \leq \frac{\Delta}{2L}$ and $0 < \epsilon < \frac{L}{252}$. For any $T \geq 1$, our resisting strategy is that the local oracle $\bo_f(\x^t)$ will return $f(\x^t) = 0$ and $\nabla f(\x^t) = \frac{1}{7}L e_1$ for all $0 \leq t \leq T$. Since the algorithm is zero-respecting and $\x^0 = \zero_d$, the above strategy fixes the iterates $\x^1, \x^2, \ldots, \x^T$ and $\supp\{\x^t\} \subseteq \{1\}$ for all $1 \leq t \leq T$. In the following, we will show that this resisting strategy is indeed consistent with a function $f \in \FCal_d(\Delta, L)$. 

\paragraph{Construction.} Since $T$ is finite, we let $r = \frac{1}{4}\min_{0 \leq i < j \leq T}\|\x^i - \x^j\| > 0$ (without loss of generality). For any $\x \in \br^d$, we define the following component functions given by 
\begin{equation*}
g_{\x^t}(\x) = \min\left\{1, \tfrac{\|\x-\x^t\|^2}{r^2}\right\}e_2^\top\x + \left(1 - \min\left\{1, \tfrac{\|\x-\x^t\|^2}{r^2}\right\}\right)e_1^\top(\x - \x^t), \quad \textnormal{for all } t = 0, 1, \ldots, T.  
\end{equation*}
Then, we further define the hard function as follows, 
\begin{equation}\label{example:nonconvex}
f(\x) = \tfrac{L}{7}\max\{h(\x), -\tfrac{7\Delta}{L}\}, 
\end{equation}
where $h: \br^d \mapsto \br$ is a nonsmooth and nonconvex function given by 
\begin{equation*}
h(\x) = \left\{\begin{array}{cl}
g_{\x^t}(\x), & \textnormal{for any } \x \in \BB_r(\x^t), \\
e_2^\top\x, & \textnormal{otherwise.}
\end{array}\right. 
\end{equation*}
\paragraph{Consistent with resisting strategy.} It is clear that $g(\x^t) = 0$ for all $0 \leq t \leq T$. For any $\x \in \BB_r(\x^t)$, we have 
\begin{equation}\label{def:grad-resist}
\nabla g_{\x^t}(\x) = \tfrac{2e_2^\top\x}{r^2}(\x - \x^t) + \tfrac{\|\x - \x^t\|^2}{r^2}e_2 - \tfrac{2e_1^\top(\x - \x^t)}{r^2}(\x - \x^t) - \tfrac{\|\x - \x^t\|^2}{r^2}e_1 + e_1. 
\end{equation}
Thus, we have $\nabla g_{\x^t}(\x^t) = e_1$ for all $0 \leq i \leq T$. By appealing to the definition of $f$ and $h$, we have $f(\x^t) = \frac{1}{7}Lg(\x^t)$ and $\nabla f(\x^t) = \frac{1}{7}L\nabla g_{\x^t}(\x^t)$ for all $0 \leq t \leq T$. This implies the desired result. 

\paragraph{Main analysis.} We are ready to prove (i) and (ii). For (i), we see from the definition of $h(\x)$ that 
\begin{equation*}
f(\x^0) = \tfrac{L}{7}\max\{h(\x^0), -\tfrac{7\Delta}{L}\} = \tfrac{L}{7}\max\{h(\zero_d), -\tfrac{7\Delta}{L}\} = 0, 
\end{equation*}
and $f^\star = \min_{\x \in \br^d} f(\x) = \tfrac{L}{7} \cdot (-\tfrac{7\Delta}{L}) = -\Delta$. Putting these pieces together yields that $f(\x^0) - f^\star \leq \Delta$. Thus, (i) is satisfied. For (ii), we first prove that $h$ is continuous. It suffices to verify the points lying on the boundary of each $\BB_r(\x^t)$. Indeed, we consider $\bar{\x}$ satisfying that $\|\bar{\x} - \x^t\| = r$ and have 
\begin{eqnarray*}
\lefteqn{\lim_{\x \rightarrow \bar{\x}, \x \in \BB_r(\x^t)} h(\x) = \lim_{\x \rightarrow \bar{\x}, \x \in \BB_r(\x^t)} g_{\x^t}(\x)} \\
& = & \lim_{\x \rightarrow \bar{\x}, \x \in \BB_r(\x^t)} \min\left\{1, \tfrac{\|\x-\x^t\|^2}{r^2}\right\}e_2^\top\x + \left(1 - \min\left\{1, \tfrac{\|\x-\x^t\|^2}{r^2}\right\}\right)e_1^\top(\x - \x^t) \ = \ e_2^\top\bar{\x}. 
\end{eqnarray*}
Since $\x \mapsto e_2^\top \x$ is clearly $1$-Lipschitz over $\br^d$, it suffices to prove that $g_{\x^t}(\cdot)$ is 7-Lipschitz over $\BB_r(\x^t)$. Since $\x^0 = \zero_d$ and $\supp\{\x^t\} \subseteq \{1\}$ for all $1 \leq t \leq T$, we have $e_2^\top \x^t = 0$. Then, Eq.~\eqref{def:grad-resist} implies
\begin{eqnarray*}
\|\nabla g_{\x^t}(\x)\| & = & \left\|\tfrac{2e_2^\top(\x - \x^t)}{r^2}(\x - \x^t) + \tfrac{\|\x - \x^t\|^2}{r^2}e_2 - \tfrac{2e_1^\top(\x - \x^t)}{r^2}(\x - \x^t) - \tfrac{\|\x - \x^t\|^2}{r^2}e_1 + e_1\right\| \\
& \leq & \tfrac{4\|\x - \x^t\|^2}{r^2} + \tfrac{2\|\x - \x^t\|^2}{r^2} + \|e_1\| \ \leq \ 7. 
\end{eqnarray*}
Putting these pieces together yields that $h$ is $7$-Lipschitz over $\br^d$. Thus, Eq.~\eqref{example:nonconvex} guarantees that $f$ is $L$-Lipschitz over $\br^d$ and this implies that (ii) is satisfied. 

It suffices to verify (iii). The key step is to show that $h$ has no $(\delta, \frac{1}{36})$-Goldstein stationary point for any given $\delta > 0$. Indeed, by the definition of $h$, we have
\begin{equation*}
\nabla h(\x) = \left\{\begin{array}{cl}
\nabla g_{\x^t}(\x), & \textnormal{for any } \x \textnormal{ such that } \|\x - \x^t\| < r, \\
e_2, &  \textnormal{for any } \x \textnormal{ such that } \|\x - \x^t\| > r \textnormal{ for any } 0 \leq t \leq T.  
\end{array}\right. 
\end{equation*}
For any point $\x \in \br^d$ satisfying $\|\x - \x^t\| = r$, we have $h$ is nondifferentiable and $\partial h(\x)$ consists of the convex combination of $\nabla g_{\x^t}(\x)$ for some $\|\x - \x^t\| < r$ and $e_2$. In addition, we can derive from Eq.~\eqref{def:grad-resist} that the set $\{\nabla g_{\x^t}(\x): \|\x - \x^t\| < r\}$ depends on $\x$ and $\x^t$ only through $\x - \x^t$. This implies that 
\begin{equation*}
\{\nabla g_{\x^t}(\x): \|\x - \x^t\| < r\} = \{\nabla g_{\zero_d}(\z): \|\z\| < r\}. 
\end{equation*}
Since the $\delta$-Goldstein subdifferential at any point is the set of the convex combination of subgradients, and each subgradient can be expressed as a convex combination of gradients at differentiable points, we can simply consider the convex combination of gradients at differentiable points. 

We let $\x \in \br^d$ and $\delta > 0$. Then, for any $\xi \in \partial_\delta h(\x) = \textnormal{conv}(\cup_{\y \in \BB_\delta(\x)}  \partial h(\y)) \subseteq \br^d$, the Carath\'{e}odory's theorem~\citep{Eckhoff-1993-Helly} guarantees that $\xi = \sum_{i=1}^{d+1} \lambda_i\xi_i$ where $\xi_i$ is either $e_2$ or in $\{\nabla g_{\zero_d}(\z): \|\z\| < r\}$ and $\sum_{i=1}^{d+1} \lambda_i = 1$ with $\lambda_i \geq 0$. By abuse of notation, there exists $N \leq d+1$ such that 
\begin{equation*}
\xi = \lambda e_2 + \sum_{i=1}^N \lambda_i\nabla g_{\zero_d}(\z_i), \textnormal{ for } \z_i \in \BB_r(\zero_d) \textnormal{ and } \sum_{i=1}^N \lambda_i = 1 - \lambda \textnormal{ with } \lambda_i \geq 0. 
\end{equation*}
That is to say, for $\z_i \in \BB_r(\zero_d)$ and $\sum_{i=1}^N \lambda_i = 1 - \lambda$ with $\lambda_i \geq 0$, we have
\begin{equation*}
\xi = \lambda e_2 + \sum_{i=1}^N \lambda_i\left(\tfrac{2e_2^\top\z_i}{r^2}\z_i + \tfrac{\|\z_i\|^2}{r^2}e_2 - \tfrac{2e_1^\top\z_i}{r^2}\z_i - \tfrac{\|\z_i\|^2}{r^2}e_1 + e_1\right). 
\end{equation*}
By using the change of variable $\z_i \mapsto \frac{\z_i}{r}$ and $\sum_{i=1}^N \lambda_i = 1 - \lambda$, we have
\begin{eqnarray*}
\lefteqn{\xi = \lambda e_2 + \sum_{i=1}^N \lambda_i\left(2(e_2^\top\z_i)\z_i + \|\z_i\|^2 e_2 - 2(e_1^\top\z_i)\z_i - \|\z_i\|^2e_1 + e_1\right)} \\
& = & \left(\lambda + \sum_{i=1}^N \lambda_i\|\z_i\|^2\right)e_2 + 2\left(\sum_{i=1}^N \lambda_i((e_2 - e_1)^\top\z_i) \z_i\right) + \left(1 - \lambda - \sum_{i=1}^N \lambda_i\|\z_i\|^2\right)e_1. 
\end{eqnarray*}
If $\|\xi\| \geq 1$, we conclude the desired result since $\x \in \br^d$ and $\delta > 0$ are chosen arbitrarily. Otherwise, we assume that $\|\xi\| < 1$ and have 
\begin{eqnarray*}
\lefteqn{\xi^\top e_2 = \lambda + \sum_{i=1}^N \lambda_i\|\z_i\|^2 + 2\left(\sum_{i=1}^N \lambda_i((e_2 - e_1)^\top\z_i) \cdot e_2^\top \z_i\right)} \\
& \geq & \lambda + \sum_{i=1}^N \lambda_i(e_1^\top \z_i)^2 + \sum_{i=1}^N \lambda_i(e_2^\top \z_i)^2 + \sum_{i=1}^N 2\lambda_i(e_2^\top\z_i)^2 - \sum_{i=1}^N 2\lambda_i(e_1^\top\z_i)(e_2^\top \z_i) \\ 
& \geq & \sum_{i=1}^N \lambda_i(e_1^\top \z_i)^2 + \sum_{i=1}^N \lambda_i(e_2^\top \z_i)^2 - \sum_{i=1}^N 2\lambda_i(e_1^\top\z_i)(e_2^\top \z_i) \\
& = & \sum_{i=1}^N \lambda_i((e_2 - e_1)^\top\z_i)^2. 
\end{eqnarray*}
Since $\lambda_i \geq 0$, we have $\xi^\top e_2 \geq 0$. Then, we have
\begin{equation}\label{inequality:key-est}
\xi^\top e_2 \geq (1-\lambda)\xi^\top e_2 \geq \left(\sum_{i=1}^N \lambda_i\right)\left(\sum_{i=1}^N \lambda_i((e_2 - e_1)^\top\z_i)^2\right) \geq \left(\sum_{i=1}^N \lambda_i|(e_2 - e_1)^\top\z_i|\right)^2. 
\end{equation}
Putting these pieces together yields that 
\begin{eqnarray*}
\lefteqn{\xi^\top(e_2 + e_1) = 1 + 2\left(\sum_{i=1}^N \lambda_i((e_2 - e_1)^\top\z_i)((e_2 + e_1)^\top\z_i)\right)} \\
& \overset{\|\z_i\| \leq 1}{\geq} & 1 - 4\left(\sum_{i=1}^N \lambda_i|(e_2 - e_1)^\top\z_i|\right) \ \overset{\textnormal{Eq.~\eqref{inequality:key-est}}}{\geq} \ 1 - 4\sqrt{\xi^\top e_2}. 
\end{eqnarray*}
Combining the above inequality with $\|\xi\| < 1$ yields that 
\begin{equation*}
1 \leq \xi^\top(e_2 + e_1) + 4\sqrt{\xi^\top e_2} \leq \sqrt{2}\|\xi\| + 4\sqrt{\|\xi\|} \leq (4+\sqrt{2})\sqrt{\|\xi\|}. 
\end{equation*}
This implies that $\|\xi\| \geq \frac{1}{36}$. Since $\x \in \br^d$ and $\delta > 0$ are chosen arbitrarily, we obtain the desired result that $h$ has no $(\delta, \frac{1}{36})$-Goldstein stationary point for any given $\delta > 0$. 

Turing back to the function $f$ defined in Eq.~\eqref{example:nonconvex}, we have $f(\x^t) = \frac{L}{7}h(\x^t) = 0$ for all $0 \leq t \leq L$. Since $h$ is $7$-Lipschitz over $\br^d$, we have $h(\x) > -\frac{7\Delta}{L}$ if $\|\x - \x^t\| \leq \frac{\Delta}{2L}$ for some $t$. This further implies that $\partial_\delta f(\x^t) = \frac{L}{7}\partial_\delta h(\x^t)$ if $0 < \delta \leq \frac{\Delta}{2L}$. Since $h$ has no $(\delta, \frac{1}{36})$-Goldstein stationary point for any given $\delta > 0$, we have $\|\xi\| \geq \frac{L}{252}$ for any $\xi \in \partial_\delta f(\x^t)$. This shows that $\x^t$ is not a $(\delta, \epsilon)$-Goldstein stationary point of $f$ for all $0 \leq t \leq T$ if $0 < \delta \leq \frac{\Delta}{2L}$ and $0 < \epsilon < \frac{L}{252}$. 

\paragraph{Conclusion.} Therefore, we conclude that any zero-respecting algorithm cannot return a $(\delta, \epsilon)$-Goldstein stationary point of $f$ defined in Eq.~\eqref{example:nonconvex} if the number of queries is no more than any finite $T$. This further implies that $\TCal_{\delta, \epsilon}(\ACal_{\zr}, \{f\}) \geq T$ for any $T \geq 1$. Based on the above arguments, we are ready to summarize our main results in the following theorem. 
\begin{theorem}\label{Theorem:ZR}
Suppose that $d \geq 2$ is fixed and finite and let $\Delta, L > 0$ be given and independent of $d$.  If $f$ is defined in Eq.~\eqref{example:nonconvex} with $0 < \delta < \frac{\Delta}{2L}$ and $0 < \epsilon \leq \frac{L}{252}$, we have $\TCal_{\delta, \epsilon}(\ACal_{\zr}, \{f\}) \geq T$ for any $T \geq 1$. 
\end{theorem}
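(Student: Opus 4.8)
The statement is the conclusion of the resisting-oracle construction set up just above, so the plan is to assemble those pieces into a clean argument. Fix an arbitrary horizon $T \geq 1$ and an arbitrary algorithm $\algo \in \ACal_{\zr}$. The adversary commits in advance to returning $\bo_f(\x^t) = (0, \tfrac{1}{7}L e_1)$ at every query $t = 0, 1, \ldots, T$; because these answers do not reference the (still unspecified) objective, the zero-respecting property together with $\x^0 = \zero_d$ pins down the whole query sequence $\x^0, \x^1, \ldots, \x^T$ using $\algo$ alone and forces $\supp\{\x^t\} \subseteq \{1\}$ for each $t$. Assuming without loss of generality that these points are pairwise distinct (re-querying yields no new information), I set $r = \tfrac14 \min_{0 \le i < j \le T} \|\x^i - \x^j\| > 0$ and take $f$ to be the function of Eq.~\eqref{example:nonconvex} built from the components $g_{\x^t}$. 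The key structural point is that $f$ is defined \emph{after} the iterates are frozen, so there is no circularity in the construction.

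First I would check that $f$ is a legitimate witness, i.e.\ that it is consistent with the committed oracle answers and lies in $\FCal_d(\Delta, L)$. Consistency is the computation already recorded: $g_{\x^t}(\x^t) = 0$ and, by Eq.~\eqref{def:grad-resist}, $\nabla g_{\x^t}(\x^t) = e_1$, so $f(\x^t) = \tfrac{L}{7} h(\x^t) = 0$ and $\nabla f(\x^t) = \tfrac{1}{7} L e_1$ as claimed; the locality required of a $0^{\textnormal{th}}$/$1^{\textnormal{st}}$ oracle is automatic since it only inspects $f$ near the query point. Membership in $\FCal_d(\Delta, L)$ splits into (i) $f(\zero_d) = 0$ and $f^\star = -\Delta$, hence a function-value gap of $\Delta$; and (ii) $L$-Lipschitzness: $h$ is continuous across each sphere $\|\x - \x^t\| = r$ (both branches limit to $e_2^\top \bar\x$), $g_{\x^t}$ is $7$-Lipschitz on $\BB_r(\x^t)$ — here the identity $e_2^\top \x^t = 0$ coming from $\supp\{\x^t\}\subseteq\{1\}$ is what controls the gradient bound in Eq.~\eqref{def:grad-resist} — and $\x \mapsto e_2^\top\x$ is $1$-Lipschitz, so $h$ is $7$-Lipschitz on $\br^d$ and the clamp $\tfrac{L}{7}\max\{\,\cdot\,, -\tfrac{7\Delta}{L}\}$ makes $f$ $L$-Lipschitz.

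The heart of the argument is showing that $h$ has no $(\delta, \tfrac{1}{36})$-Goldstein stationary point for any $\delta > 0$. I would first note that $\{\nabla g_{\x^t}(\x): \|\x - \x^t\| < r\}$ depends on $\x, \x^t$ only through $\x - \x^t$, hence coincides with $\{\nabla g_{\zero_d}(\z) : \|\z\| < r\}$ independently of $t$; Carath\'eodory's theorem then writes any $\xi \in \partial_\delta h(\x)$ as $\xi = \lambda e_2 + \sum_{i=1}^N \lambda_i \nabla g_{\zero_d}(\z_i)$ with $\|\z_i\| < r$, $\lambda_i \ge 0$, $\lambda + \sum_i \lambda_i = 1$. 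After the rescaling $\z_i \mapsto \z_i/r$ and writing $\xi$ in $\{e_1, e_2\}$ coordinates, I run the chain of estimates indicated above: if $\|\xi\| < 1$ then $\xi^\top e_2 \ge \big(\sum_i \lambda_i |(e_2 - e_1)^\top \z_i|\big)^2$, then $\|\z_i\| \le 1$ gives $\xi^\top(e_1 + e_2) \ge 1 - 4\sqrt{\xi^\top e_2}$, and finally $1 \le \sqrt 2\,\|\xi\| + 4\sqrt{\|\xi\|} \le (4+\sqrt2)\sqrt{\|\xi\|}$, contradicting $\|\xi\| < \tfrac{1}{36}$. So every $\xi \in \partial_\delta h(\x)$ has $\|\xi\| \ge \tfrac{1}{36}$. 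Since $h$ is $7$-Lipschitz, $h > -\tfrac{7\Delta}{L}$ on $\BB_{\Delta/(2L)}(\x^t) \supseteq \BB_\delta(\x^t)$, so the clamp is inactive there and $\partial_\delta f(\x^t) = \tfrac{L}{7}\partial_\delta h(\x^t)$; therefore every $\g \in \partial_\delta f(\x^t)$ has $\|\g\| \ge \tfrac{L}{252} \ge \epsilon$, and no iterate $\x^0, \ldots, \x^T$ is $(\delta,\epsilon)$-Goldstein stationary. Hence $\bound_{\delta,\epsilon}(\algo, f) > T$; as $\algo \in \ACal_{\zr}$ was arbitrary this gives $\TCal_{\delta,\epsilon}(\ACal_{\zr}, \{f\}) \ge T$, and since $T$ was arbitrary the theorem follows.

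The main obstacle is the geometric estimate in the third step: packaging the $\delta$-Goldstein subdifferential through Carath\'eodory and telescoping the three inequalities down to the explicit constant $\tfrac{1}{36}$ (which then propagates to $\tfrac{L}{252}$). Everything else is routine: the adversary/zero-respecting bookkeeping and the continuity and Lipschitz verifications on the glued function $h$. One subtlety worth stating explicitly is why the construction is non-circular — the committed oracle answers are independent of $f$, so the iterates are determined before $f$ is chosen, which is what legitimizes defining $r$ from the pairwise distances of $\x^0, \ldots, \x^T$.
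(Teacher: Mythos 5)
Your proposal is correct and follows the paper's own proof essentially step for step: the same resisting oracle returning $(0, \tfrac{1}{7}L e_1)$, the same post‑hoc construction of $f$ from Eq.~\eqref{example:nonconvex} with $r$ defined from the frozen iterates, the same verification of consistency, membership in $\FCal_d(\Delta,L)$, and $7$‑Lipschitzness of $h$, and the same Carath\'eodory‑plus‑rescaling chain yielding $\|\xi\|\ge \tfrac{1}{36}$ for every $\xi\in\partial_\delta h(\x)$, which then propagates to $\|\g\|\ge \tfrac{L}{252}$ for every $\g\in\partial_\delta f(\x^t)$. The only addition beyond the paper is your explicit remark on the non‑circularity of freezing the iterates before choosing $f$, which is a helpful clarification rather than a departure.
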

\begin{remark}
Theorem~\ref{Theorem:ZR} shows that the finite number of queries is not enough to find a $(\delta, \epsilon)$-Goldstein stationary point of $f$ for any zero-respecting algorithm if $\delta, \epsilon > 0$ are smaller than some small constants. Notably, our results hold true regardless of the relationship between the dimension $d \geq 2$ and parameters $(\delta, \epsilon)$. This is different from the dimension-independent lower bound established for nonsmooth convex optimization~\citep{Nesterov-2018-Lectures}. Our results highlight the importance of convexity for obtaining dimension-independent complexity guarantee in nonsmooth optimization. 
\end{remark}

\subsection{From deterministic to zero-respecting algorithms}
We turn to stating a lower bound for finding $(\delta, \epsilon)$-Goldstein stationary points of Lipschitz functions using the local oracle $\bo_f(\x) = (f(\x), \g)$ and a class of deterministic algorithms (i.e., the class $\ACal_{\det}$).  The following theorem summarizes our results. 
\begin{theorem}\label{Theorem:DET}
Suppose that $d \geq 2$ is fixed and finite and let $\Delta, L > 0$ be given and independent of $d$.  If $0 < \delta < \frac{\Delta}{2L}$ and $0 < \epsilon \leq \frac{L}{252}$, we have $\TCal_{\delta, \epsilon}(\ACal_{\det}, \FCal_{2d}(\Delta, L)) \geq d + 1$. 
\end{theorem}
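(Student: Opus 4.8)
The plan is to follow the two-stage template of~\citet{Carmon-2020-Lower,Carmon-2021-Lower}: first obtain a lower bound against the restricted class $\ACal_{\zr}$ (already done in Theorem~\ref{Theorem:ZR}), and then promote it to the full class $\ACal_{\det}$ by an orthogonal-invariance argument, at the cost of a factor of $2$ in the dimension. The crucial difference from the smooth case is that the promotion can be sustained for only $d+1$ rounds rather than indefinitely — which is exactly why the deterministic bound is a finite $d+1$ and not the $+\infty$ of Theorem~\ref{Theorem:ZR} — and this is governed by a dimension budget: in $\br^{2d}$ the adversary has just enough orthogonal room to ``hide'' the region of Goldstein stationary points for $d+1$ queries, but not forever.

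Concretely, fix an arbitrary $\algo\in\ACal_{\det}$ operating on $\br^{2d}$, and take the hard instance to be a rotated copy of (a mild variant of) the function in Eq.~\eqref{example:nonconvex}, where the rotation — equivalently, the ``true'' orthogonal frame, with a chosen resisting direction $\su$ and a secret escape direction $\sv\perp\su$ playing the roles of $e_1$ and $e_2$ — is revealed incrementally by a resisting oracle. Running $\algo$ against the oracle: at the origin it returns value $0$ and resisting subgradient $\tfrac{L}{7}\su$; at every subsequent query $\x^t$ it plants a fresh ``needle'' $\BB_r(\x^t)$ (a translated, reoriented copy of the $g_{\x^t}$ blocks of the Theorem~\ref{Theorem:ZR} construction, aligned with $\su$) and again returns value $0$ and subgradient $\tfrac{L}{7}\su$. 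Because $g_{\x^t}$ has value $0$ and gradient equal to the resisting direction at its own center \emph{independently} of the escape direction, and because the oracle is local, these answers commit nothing about $\sv$ beyond a single halfspace inequality $\sv^\top\x^t\gtrsim-\Theta(\Delta/L)$ — needed so that a needle can patch consistently with the downward truncation at $-7\Delta/L$. Hence $\sv$ can be kept an arbitrary unit vector in the orthogonal complement of the (at most $d$-dimensional) span of the relevant parts of $\x^0,\dots,\x^d$, which is nonempty precisely because the ambient dimension is $2d$.

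It then remains to verify the ``static'' invariants exactly as in the proof of Theorem~\ref{Theorem:ZR}: the resulting $f$ lies in $\FCal_{2d}(\Delta,L)$ (it is $L$-Lipschitz after the $\tfrac{L}{7}$ rescaling of the $7$-Lipschitz inner function, and $f(\zero_{2d})-\inf f=\Delta$), and — recycling the computation that bounds $\min\{\|\g\|:\g\in\partial_\delta h(\x)\}$ below by $\tfrac{1}{36}$ — every point whose $\delta$-ball avoids the truncation region satisfies $\min\{\|\g\|:\g\in\partial_\delta f(\x)\}\ge\tfrac{L}{252}$ when $0<\delta<\tfrac{\Delta}{2L}$. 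Since $\algo$'s first $d+1$ iterates never enter that region (the oracle keeps $\sv$ undetermined and can always steer it away from $\x^0,\dots,\x^d$ by the dimension count above), none of them is a $(\delta,\epsilon)$-Goldstein stationary point when $0<\epsilon\le\tfrac{L}{252}$. Freezing the finally chosen frame produces a single $f\in\FCal_{2d}(\Delta,L)$ with $\bound_{\delta,\epsilon}(\algo,f)\ge d+1$, and taking the supremum over $\FCal_{2d}(\Delta,L)$ and the infimum over $\ACal_{\det}$ yields $\TCal_{\delta,\epsilon}(\ACal_{\det},\FCal_{2d}(\Delta,L))\ge d+1$.

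The delicate part is the consistency step: one must check that the sequence of local answers produced by the resisting oracle genuinely extends to \emph{one} function in $\FCal_{2d}(\Delta,L)$ for the finally chosen frame — i.e., that the needles at $\x^0,\dots,\x^d$, the ambient affine piece $\x\mapsto\sv^\top\x$, and the truncation at $-7\Delta/L$ glue into a single $L$-Lipschitz function. This forces $r$ and the pairwise separation of the query points to be controlled, and it forces the ``hiding'' argument to be quantitative at scale $\delta$ rather than merely pointwise, since $\partial_\delta$ averages Clarke subgradients over whole $\delta$-balls, so the stationary region must be kept at distance $>\delta$ from the iterates. A secondary subtlety, and the reason the argument caps at $d+1$ rounds, is that a general deterministic algorithm need not be zero-respecting: the statement ``each query reveals only a bounded amount about $\sv$'' must be proven via orthogonal projections onto the revealed subspace rather than via coordinate supports, and it is this bookkeeping that actually consumes the extra $d$ dimensions.
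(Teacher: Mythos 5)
Your high-level plan — a lower bound against $\ACal_{\zr}$ followed by a ``promotion'' to $\ACal_{\det}$ via orthogonal invariance, paying a factor of $2$ in the dimension — is exactly the template the paper follows (Theorem~\ref{Theorem:ZR} plus Proposition~\ref{prop:resisting}). However, the way you try to execute the promotion contains a genuine gap.

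The paper proves the promotion as a self-contained reduction (Proposition~\ref{prop:resisting}): given $\algo\in\ACal_{\det}$, it builds a zero-respecting simulator $\textsc{Z}_\algo$ acting on the \emph{unrotated} hard function $f:\br^d\to\br$, where the iterates $\z^t$ automatically satisfy $\supp\{\z^t\}\subseteq\{1\}$ and hence $e_2^\top\z^t=0$, and then sets $f_U(\x)=f(U^\top\x)$ with $U$ built column by column so that $\z^t=U^\top\x^t$. The orthogonality constraints that make the construction go through are entirely internal to that reduction, and the function $f$ is never modified. You instead try to build the hard function directly in $\br^{2d}$ through a lazy adversary, and this is where it breaks.

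The precise break: you assert that the oracle answers ``commit nothing about $\sv$ beyond a single halfspace inequality $\sv^\top\x^t\gtrsim -\Theta(\Delta/L)$.'' That is not true. The $7$-Lipschitz bound for each needle $g_{\x^t}$ is \emph{not} independent of $\sv^\top\x^t$. From Eq.~\eqref{def:grad-resist}, the gradient of $g_{\x^t}$ inside $\BB_r(\x^t)$ contains the term $\frac{2(\sv^\top\x)}{r^2}(\x-\x^t)$, and only after replacing $\sv^\top\x$ by $\sv^\top(\x-\x^t)$ — which is exactly the step in the paper that invokes $e_2^\top\x^t=0$ — does one get the bound $\le 7$. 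If $|\sv^\top\x^t|$ is merely bounded below but allowed to be large in magnitude, that term has norm up to $\frac{2(|\sv^\top\x^t|+r)}{r}$, which blows up as $r\to 0$; equivalently, the boundary of $\BB_r(\x^t)$ carries the ambient value $\approx\sv^\top\x^t$ while the center carries value $0$, forcing a drop of size $|\sv^\top\x^t|$ over a radius $r$, and the glued function is not $L$-Lipschitz. So what the adversary must maintain is the \emph{equality} $\sv^\top\x^t=0$ (or $|\sv^\top\x^t|\lesssim r$, which is essentially the same since $r$ is adversarially small) for every query, not a halfspace. This matters beyond pedantry: a halfspace constraint consumes no dimensions, so if your claim were true the argument would not cap at $d+1$ at all; it is precisely the equality constraints $\sv\perp\su$ and $\sv\perp\x^t$, one per query, that eat through the $2d$-dimensional budget and force the $\Omega(d)$ cap. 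Tracking these equalities carefully is what Proposition~\ref{prop:resisting} does, and what your proposal acknowledges only as a ``secondary subtlety'' without carrying out.

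If you replace the halfspace claim with the correct orthogonality bookkeeping — choose $\su$ after seeing $\x^0$ (so $\su\perp\x^0$ if desired), and at the end choose a unit $\sv$ in the orthogonal complement of $\{\su,\x^0,\dots,\x^{T}\}$, checking that this complement is nonempty for the advertised number of rounds in $\br^{2d}$ and that the resulting $f$ agrees with every local answer previously issued — you essentially recover the paper's Proposition~\ref{prop:resisting}, just phrased as an explicit resisting oracle rather than a simulated zero-respecting algorithm. The paper's phrasing has the advantage of cleanly separating ``which hard function'' (Theorem~\ref{Theorem:ZR}) from ``why deterministic algorithms cannot exploit the extra room'' (Proposition~\ref{prop:resisting}), so the reduction applies verbatim to any zero-respecting hard instance, not just this one.
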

\begin{remark}
Our lower bound is dimension-dependent and is thus different from dimension-independent lower bounds established in the context of convex optimization~\citep{Nemirovski-1983-Problem, Nesterov-2018-Lectures} and smooth nonconvex optimization~\citep{Carmon-2020-Lower, Carmon-2021-Lower}. Our results highlight that, even though finding a $(\delta, \epsilon)$-Goldstein stationary point in nonsmooth nonconvex optimization is computationally tractable~\citep{Davis-2022-Gradient, Tian-2022-Finite}, it is essentially harder than finding an $\epsilon$-stationary point in smooth nonconvex optimization without randomization. Notably, our lower bound has matched the best existing lower bound established in~\citet{Kornowski-2022-Complexity} and~\citet{Tian-2022-No}. 
\end{remark}
\begin{remark}
Our lower bound can be improved to $\Omega(\max\{d, \frac{\Delta}{L\delta}\})$ via appeal to a simple combination of Theorem~\ref{Theorem:DET} and~\citet[Theorem~11]{Zhang-2020-Complexity}. Indeed, they proved that any algorithm in $\ACal_{\det}$ requires $\Omega(\frac{\Delta}{L\delta})$ number of iterations to find a $(\delta, \epsilon)$-Goldstein stationary point of a function $f \in \FCal_1(\Delta, L)$ when $\epsilon \in (0, L)$. Intuitively, they can construct two different functions in $\FCal_1(\Delta, L)$ so that they share the same gradient norm at all queried points but their stationary points are $\Omega(\delta)$ away if the number of different queried points is less than $\frac{\Delta}{8L\delta}$. Their proof techniques heavily depend on the specific structure of 1-dimensional geometry and are seemingly difficult to be extended to the case of $d \geq 2$. 
\end{remark}
Our proof is based on the classical framework that translates lower bounds from $\ACal_{\zr}$ to $\ACal_{\det}$. Yet, due to different function class in Definition~\ref{def:function_class} (especially the role of $d$) and different optimality criterion in Definition~\ref{def:Goldstein}, we can not apply the results in~\citet{Carmon-2020-Lower} directly but need to reprove some basic properties. The following proposition is the core of the proof of Theorem~\ref{Theorem:DET}.
\begin{proposition}\label{prop:resisting}
Suppose that $d \geq 2$ is fixed and finite and let $\delta, \epsilon > 0$ and $\algo \in \ACal_{\det}$. Then, there exists an algorithm $\textsc{Z}_\algo \in \ACal_{\zr}$ with the following property: for every function $f: \br^d \mapsto \br$, there exists an orthogonal matrix $U \in \br^{2d \times d}$ such that
\begin{equation*}
\bound_{\delta, \epsilon}(\algo, f_U) > d  \textnormal{ or } \bound_{\delta, \epsilon}(\algo, f_U) = \bound_{\delta, \epsilon}(\textsc{Z}_\algo, f), 
\end{equation*}
where $f_U(\x) := f(U^\top \x)$ is the rotated version of the original function $f$. 
\end{proposition}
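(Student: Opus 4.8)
The plan is to carry out the classical ``rotate the hard instance'' reduction of~\citet{Carmon-2020-Lower} in a \emph{deterministic} form. Since $\algo$ sees its input only through the local oracle, the rotation $U$ can be built \emph{adaptively}, one column at a time, in reaction to $\algo$'s queries; this is the deterministic surrogate of the probabilistic argument used for randomized algorithms. Concretely, $\textsc{Z}_\algo$ will internally simulate $\algo$ on $f_U$ while querying only $f$, and will output the de-rotated iterates $\y^t:=U^\top\x^t$, where $\{\x^t\}_{t\ge0}\subseteq\br^{2d}$ are the points $\algo$ queries inside the simulation (with $\x^0=\zero_{2d}$, as is standing convention).

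\textbf{Construction.} I maintain a growing set $J_t\subseteq[d]$ of ``discovered coordinates'' and a partial orthonormal family $\{u_j:j\in J_t\}\subseteq\br^{2d}$, with the invariant that every $u_j$ is orthogonal to all queries issued up to and including the step at which $j$ joined $J$. At step $t$: the simulated $\algo$ issues $\x^t$; I set $(\y^t)_j:=u_j^\top\x^t$ for $j\in J_{t-1}$ and $(\y^t)_j:=0$ otherwise; I query the oracle of $f$ at $\y^t$, obtaining $(f(\y^t),\tilde\g^t)$ with $\tilde\g^t\in\partial f(\y^t)$; I put $J_t:=J_{t-1}\cup\supp\{\tilde\g^t\}$ and commit, for each newly discovered index $j$, a unit vector $u_j$ orthogonal to all columns committed so far and to $\x^0,\dots,\x^t$; and I return $(f(\y^t),\sum_{j\in J_t}(\tilde\g^t)_j u_j)$ to $\algo$ as the oracle answer at $\x^t$. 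Columns of $U$ whose index is never discovered are filled in at the end, orthonormal and orthogonal to all processed queries. The dependencies are acyclic (the answer at step $t$ uses only columns committed by step $t$), $U=[u_1\cdots u_d]$ has orthonormal columns, $\y^t=U^\top\x^t$, and $\sum_j(\tilde\g^t)_j u_j=U\tilde\g^t\in U\partial f(\y^t)=\partial f_U(\x^t)$, so the answers handed to $\algo$ are exactly those of a genuine local $0^{\textnormal{th}}/1^{\textnormal{st}}$ oracle for $f_U$; hence the simulated trajectory equals $\algo[f_U]$.

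\textbf{The two alternatives.} Since columns not yet discovered by step $t$ are chosen orthogonal to $\x^t$, we get $\supp\{\y^t\}\subseteq J_{t-1}=\bigcup_{s<t}\supp\{\tilde\g^s\}$ and $\y^0=\zero_d$, so $\textsc{Z}_\algo\in\ACal_{\zr}$ (we let it output $\zero_d$ from the first step the construction cannot proceed, which is harmless). For the complexity identity, $U^\top$ being a coisometry gives $\{U^\top\x:\|\x-\x^t\|\le\delta\}=\BB_\delta(\y^t)$, hence $\partial_\delta f_U(\x^t)=U\,\partial_\delta f(\y^t)$ (the identity $\partial f_U=U\partial f(U^\top\cdot)$ is immediate from the Rademacher description of $\partial$, since $U^\top$ is surjective); as $U$ is an isometry, $\x^t$ is a $(\delta,\epsilon)$-Goldstein stationary point of $f_U$ iff $\y^t$ is one of $f$. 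Therefore, whenever $\algo[f_U]$ reaches such a point by iteration $d$, the first stationary index coincides for $\algo[f_U]$ and for $\textsc{Z}_\algo[f]$, i.e.\ $\bound_{\delta,\epsilon}(\algo,f_U)=\bound_{\delta,\epsilon}(\textsc{Z}_\algo,f)$; otherwise $\bound_{\delta,\epsilon}(\algo,f_U)>d$, which is the first alternative.

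\textbf{Main obstacle.} The crux — and the reason both the ambient dimension $2d$ and the escape clause ``$\bound>d$'' are needed — is to check that committing a fresh column never fails while at most $d$ queries have been processed. When a column is committed at step $t$ it must avoid the span of the $\le|J_t|\le d$ committed columns and of the $\le t$ nonzero queries among $\x^0,\dots,\x^t$ (that $\x^0=\zero_{2d}$ is what saves one dimension here), a subspace of dimension at most $(|J_t|-1)+t\le 2d-1<2d$ whenever $t\le d$; the identical count handles the final completion of $U$. This is precisely the range of iterations relevant to the case $\bound_{\delta,\epsilon}(\algo,f_U)\le d$, so the construction goes through. The remaining points — that $f_U$ inherits $L$-Lipschitzness and the function-value gap from $f$, and that the lifted oracle is local — are routine.
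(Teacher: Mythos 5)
Your proof follows essentially the same strategy as the paper's: build the rotation $U$ \emph{adaptively}, column by column, in reaction to $\algo$'s queries (the deterministic resisting construction of~\citet[Appendix~A]{Carmon-2020-Lower}), maintain the invariant that undiscovered columns are orthogonal to all queries seen so far so that $\y^t = U^\top\x^t$ has support in the discovered set, and close the argument by dimension counting in $\br^{2d}$. The two alternatives and the inductive structure are identical.

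One minor discrepancy: you assert $\x^0 = \zero_{2d}$ ``as is standing convention,'' but the paper explicitly allows an arbitrary (deterministic) $\x^0 \in \br^{2d}$ for $\algo \in \ACal_{\det}$ — it is only the \emph{zero-respecting} class that starts at the origin. Your count $(|J_t|-1)+t \le 2d-1$ for $t\le d$ therefore leans on an assumption the paper does not make; dropping it costs one dimension and forces $t \le d-1$, which is exactly the range the paper's proof uses (``$2d-(t+1)-|S_t| \ge |S_t^c|$ for all $t \le d-1$'') and still suffices for the stated disjunction. Your treatment of the Goldstein-subdifferential transport, $\partial_\delta f_U(\x) = U\,\partial_\delta f(U^\top\x)$ via the Rademacher description and surjectivity of $U^\top$, is slightly more explicit than the paper's one-line invocation of the norm identity, but it establishes the same fact. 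In short: correct and the same route, modulo the small $\x^0$-normalization you should not assume.
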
 
\begin{proof}
Following the proof strategy used in~\citet[Appendix A]{Carmon-2020-Lower}, we consider an explicit construction of $\textsc{Z}_\algo \in \ACal_{\zr}$ with the following key property: for every function $f: \br^d \mapsto \br$, there exists an orthogonal matrix $U \in \br^{2d \times d}$ (i.e., $U^\top U = I_d$) such that $f_U(\x) := f(U^\top \x)$ satisfies that the first $d$ iterates in the sequence $\textsc{Z}_\algo[f]$ and $U^\top\algo[f_U]$ are identical. 

We first show that $\textsc{Z}_\algo$ with the aforementioned key property implies that there exists an orthogonal matrix $U \in \br^{2d \times d}$ such that
\begin{equation}\label{result:resisting-first}
\bound_{\delta, \epsilon}(\algo, f_U) \geq d \textnormal{ or } \bound_{\delta, \epsilon}(\algo, f_U) = \bound_{\delta, \epsilon}(\textsc{Z}_\algo, f). 
\end{equation}
Indeed, if $\bound_{\delta, \epsilon}(\algo, f_U) > d$, we are done. Otherwise, let $\{\x^t\}_{t \geq 0}$ be generated by the algorithm $\algo$ on the function $f_U$, we have 
\begin{equation*}
\bound_{\delta, \epsilon}(\algo, f_U) = \bound_{\delta, \epsilon}(\{\x^t\}_{t \geq 0}, f_U) = \inf\left\{t \geq 0: \min \left\{\|\g\|: \g \in \partial_\delta f_U(\x^t) \right\} \leq \epsilon\right\}. 
\end{equation*}
Since $f_U(\x) = f(U^\top \x)$ and $\|U\g\| = \|\g\|$ for all orthogonal matrices $U \in \br^{2d \times d}$, we have 
\begin{equation*}
\min \left\{\|\g\|: \g \in \partial_\delta f_U(\x^t) \right\} = \min \left\{\|\tilde{\g}\|: \tilde{\g} \in \partial_\delta f(U^\top\x^t) \right\}. 
\end{equation*}
Putting these pieces together yields that 
\begin{equation*}
\bound_{\delta, \epsilon}(\algo, f_U) = \inf\left\{t \geq 0: \min \left\{\|\tilde{\g}\|: \tilde{\g} \in \partial_\delta f(U^\top\x^t) \right\} \leq \epsilon\right\} = \bound_{\delta, \epsilon}(\{U^\top\x^t\}_{t \geq 0}, f) = \bound_{\delta, \epsilon}(U^\top\algo[f_U], f). 
\end{equation*}
Since $\bound_{\delta, \epsilon}(\algo, f_U) \leq d$, we have $\bound_{\delta, \epsilon}(U^\top\algo[f_U], f) \leq d$. This implies that the first $d$ iterates of $U^\top\algo[f_U]$ determines $\bound_{\delta, \epsilon}(U^\top\algo[f_U], f)$. Since the first $d$ iterates in the sequence $\textsc{Z}_\algo[f]$ and $U^\top\algo[f_U]$ are identical, we have $\bound_{\delta, \epsilon}(U^\top\algo[f_U], f) = \bound_{\delta, \epsilon}(\textsc{Z}_\algo, f)$. Putting these pieces together yields Eq.~\eqref{result:resisting-first}. 

It remains to construct $\textsc{Z}_\algo \in \ACal_{\zr}$ where the first $d$ iterates can match that of the algorithm $\algo \in \ACal_{\det}$ under an appropriate orthogonal rotation. We conduct this by describing the operation inductively on any the function $f: \br^d \mapsto \br$, which we denote $\{\z^t\}_{t \geq 0} = \textsc{Z}_\algo[f]$. By the definition, the dynamics of the algorithm $\textsc{Z}_\algo$ at the $t^\textnormal{th}$ iteration is determined by a set $S_t \subseteq \{1, 2, \ldots, d\}$ and the orthonormal vectors $\{\su^i\}_{i \in S_t} \subseteq \br^{2d}$ identified with this set. Note that $S_t = \supp\{\g^0, \g^1, \g^2, \ldots, \g^{t-1}\}$. Thus, we have $\emptyset = S_0 = S_1 \subseteq S_2 \subseteq \ldots$ and the collection of vectors $\{\su^i\}_{i \in S_t}$ grows as $t \geq 0$ increases. Then, we can let $U \in \br^{2d \times d}$ be the orthogonal matrix whose $i^\textnormal{th}$ column is $\su^i$ (note that $U$ may not be completely determined throughout the implementation of $\textsc{Z}_\algo$ but we can still simulate the operation of $\algo$ on $f_U$). Letting $\{\x^t\}_{t \geq 0} = \algo[f_U]$, it suffices to show that 
\begin{equation}\label{result:resisting-second}
\z^t = U^\top \x^t \quad \textnormal{and} \quad \supp\{\z^t\} \subseteq S_t, \quad \textnormal{for all } 0 \leq t \leq d - 1. 
\end{equation}
We proceed with the inductive argument. Since $\algo$ is deterministic, the iterate $\x^0 \in \br^{2d}$ is an arbitrary (but deterministic) vector. To Eq.~\eqref{result:resisting-second}, we need to pick up $\{\su^i\}_{1 \leq i \leq d} \in \br^{2d}$ such that $(\su^i)^\top \x^0 = 0$. This implies that the first iterate of $\textsc{Z}_\algo$ satisfies $\z^0 = \zero_d$. Then, we show that $\textsc{Z}_\algo$ can emulate $\x^1$ and from it can construct $\z^1$ that satisfies Eq.~\eqref{result:resisting-second}. To obtain $\x^1$, we require a generalized subgradient in $\partial f_U(\x^0)$. This can be done using $\tilde{\g}^0 \in \partial f(\z^0)$ and orthonormal vectors $\{\su^i\}_{i \in S_1}$. Since $\supp\{\tilde{\g}^0\} \subseteq S_1$, we have $\g^0 = \sum_{i \in S_1} \tilde{g}_i^0 \su^i \in \partial f_U(\x^0)$. Since $\algo \in \ACal_{\det}$ is deterministic, we have $\x^1$ is a function of $\g^0$, and thus $\textsc{Z}_\algo$ can simulate and compute it. To satisfies the support condition $\supp\{\z^1\} \subseteq S_1$, we require that $(\su^i)^\top \x^1 = 0$ for all $i \notin S_1$. Note that we only require the columns of $U$ index by the support $S_1$ to compute $\z^1 = U^\top \x^1$. This confirms the previous argument that $U$ may not be completely determined throughout the implementation of $\textsc{Z}_\algo$ but we can still simulate the operation of $\algo$ on $f_U$. Repeating this process for all $t = 0, 1, 2, \ldots, d-1$, we have shown that $\textsc{Z}_\algo$ can emulate $\x^t$ and from it can construct $\z^t$ that satisfies Eq.~\eqref{result:resisting-second} for all $t \leq d-1$. 

The final step is show that the process can be repeated for all $t = 0, 1, 2, \ldots, d-1$. In particular, after computing $S_{t+1}$, we can find the orthonormal vectors $\{\su^i\}_{i \in S_{t+1} \setminus S_t}$ such that $(\su^j)^\top \x^s = 0$ for all $s \leq t$ and $j \in S_{t+1} \setminus S_t$, and additionally that $U$ is an orthogonal matrix. Equivalently, the orthogonal complement of the span of $\{a^0, a^1, \ldots, a^t, \{u^i\}_{i \in S_t}\}$ is large enough such that we can choose $\{\su^i\}_{i \in S_{t+1} \setminus S_t}$ from it. Note that this orthogonal complement has the dimension at least $2d - (t + 1) - |S_t| = |S_t^c| + d - t - 1 \geq |S_t^c|$ for all $t \leq d-1$ and $|S_t^c| \leq |S_{t+1} \setminus S_t|$. Thus, there exists orthonormal vectors $\{\su^i\}_{i \in S_{t+1} \setminus S_t}$ that meet our requirement. This completes the induction. 
\end{proof}
\paragraph{Proof of Theorem~\ref{Theorem:DET}.} For any algorithm $\algo \in \ACal_{\det}$ and every function $f: \br^d \mapsto \br$, Proposition~\ref{prop:resisting} implies that there exists a zero-respecting algorithm $\textsc{Z}_\algo \in \ACal_{\zr}$ and an orthogonal matrix $U \in \br^{2d \times d}$ (dependent on $f$ and $\algo$) such that 
\begin{equation*}
\bound_{\delta, \epsilon}(\algo, f_U) \geq \min\left\{d + 1, \bound_{\delta, \epsilon}(Z_\algo, f)\right\}. 
\end{equation*}
Letting $f: \br^d \mapsto \br$ be defined in Eq.~\eqref{example:nonconvex}, Theorem~\ref{Theorem:ZR} show that $\TCal_{\delta, \epsilon}(\ACal_{\zr}, \{f\}) \geq T$ for any $T \geq 1$. Since $T$ can be arbitrarily large, we have 
\begin{equation*}
\bound_{\delta, \epsilon}(\textsc{Z}_\algo, f) \geq \inf_{\textsc{Z} \in \ACal_{\zr}} \bound_{\delta, \epsilon}(\textsc{Z}, f) = \TCal_{\delta, \epsilon}(\ACal_{\zr}, \{f\}) \geq T \geq d+1.  
\end{equation*}
Putting these pieces together yields that 
\begin{equation*}
\bound_{\delta, \epsilon}(\algo, f_U) \geq d + 1. 
\end{equation*}
Since $f_U(\x) = f(U^\top \x)$ and $U$ is an orthogonal matrix in $\br^{2d \times d}$, we have 
\begin{equation*}
f_U(\zero_{2d}) - \min_{\x \in \br^{2d}} f_U(\x) = f(\zero_d) - \min_{\x \in \br^d} f(\x), \quad \textnormal{and} \quad \|f_U(\x') - f_U(\x)\| = \|f(U^\top\x') - f(U^\top\x)\|. 
\end{equation*}
Since $f \in \FCal_d(\Delta, L)$, we have $f_U(\zero_{2d}) - \min_{\x \in \br^{2d}} f_U(\x) \leq \Delta$ and $\|f_U(\x') - f_U(\x)\| \leq L\|U^\top(\x' - \x)\| \leq L\|\x' - \x\|$. This implies that $f_U \in \FCal_{2d}(\Delta, L)$. Putting these pieces together yields that 
\begin{equation*}
\sup_{f \in \FCal_{2d}(\Delta, L)} \bound_{\delta, \epsilon}(\algo, f) \geq d + 1. 
\end{equation*}
By taking the infimum over $\algo \in \ACal_{\det}$, we conclude that 
\begin{equation*}
\TCal_{\delta, \epsilon}(\ACal_{\det}, \FCal_{2d}(\Delta, L)) = \inf_{\algo \in \ACal_{\det}} \sup_{f \in \FCal_{2d}(\Delta, L)} \bound_{\delta, \epsilon}(\algo, f) \geq d + 1. 
\end{equation*}
This completes the proof. 

\section{Deterministic Algorithm with Only $1^\textnormal{st}$ Oracle}\label{sec:grad}
In this section, we demonstrate the importance of having access either to randomness or to an $0^\textnormal{th}$ oracle. In particular, we prove that any deterministic algorithm with only $1^\textnormal{st}$ oracle can not find an approximate Goldstein stationary point within a finite number of iterations up to small constant tolerances. 
\begin{theorem} \label{Theorem:1LB}
For any deterministic algorithm with only $1^\textnormal{st}$ oracle, there exists an $1$-Lipschitz function $f : \br^d \to [-1, 1]$ such that the algorithm can not find a $(\delta, \epsilon)$-Goldstein stationary point in a finite number of iterations for any $0 < \delta < \epsilon < 1$.
\end{theorem}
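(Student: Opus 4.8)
The plan is a resisting-oracle argument in the spirit of the proof of Theorem~\ref{Theorem:ZR}, adapted to an arbitrary deterministic algorithm. Fix a deterministic algorithm $\algo$ that queries only the $1^\textnormal{st}$ oracle and fix $0<\delta<\epsilon<1$. The first observation is that, because $\algo$ is deterministic and the oracle reveals only a subgradient, committing the oracle to return one and the same vector $\g^\star$ (say $\g^\star=\tfrac17 e_1$, so $\|\g^\star\|\le 1$) at \emph{every} query forces the iterate sequence $\algo[f]=\{\x^t\}_{t\ge 0}$ to be a fixed, deterministic sequence that does not depend on the not-yet-revealed function $f$. Thus the whole problem reduces to: given the fixed sequence $\{\x^t\}_{t\ge 0}$, construct a $1$-Lipschitz $f\colon\br^d\to[-1,1]$ with $\g^\star\in\partial f(\x^t)$ for all $t$ (so that the resisting oracle is legitimate) and with $\mathrm{dist}\!\big(0,\partial_\delta f(\x^t)\big)\ge\epsilon$ for all $t$; then $\bound_{\delta,\epsilon}(\algo,f)=\inf\emptyset=+\infty$, which is exactly the entry claimed for this cell of Table~\ref{tbl:summary}.

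For the construction I would reuse the mechanism of Theorem~\ref{Theorem:ZR}: around each $\x^t$ place a ``needle'' component $g_{\x^t}$ that (i) has gradient exactly $\g^\star$ at $\x^t$ and (ii) inside a small radius interpolates between the direction $\g^\star$ and a second, orthogonal direction, so that the quadratic-form estimate from the proof of Theorem~\ref{Theorem:ZR} (the chain of inequalities ending in $\|\xi\|\ge\tfrac1{36}$) applies verbatim and shows that on any $\delta$-ball meeting a needle the $\delta$-Goldstein subdifferential is bounded away from $0$. Away from all needles I would glue in a simple affine background so that the resulting function $h$ is globally $O(1)$-Lipschitz, and then set $f=c\max\{h,-c'\}$ with constants chosen so that (a) the clipping level $-c'$ lies strictly below the values $h$ takes on a $\delta$-neighborhood of the whole trajectory, whence $\partial_\delta f(\x^t)=c\,\partial_\delta h(\x^t)$ and the no-stationarity property transfers from $h$ to $f$, and (b) $f$ is $1$-Lipschitz with $\inf f>-\infty$, landing $f$ in $\FCal_d(\Delta,1)$ and, after the trivial rescaling, in $[-1,1]$.

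The steps, in order, would be: (1) pin the oracle strategy and extract the fixed iterate sequence $\{\x^t\}$; (2) define the needle components and the background, verifying continuity on the sphere boundaries and the global Lipschitz bound; (3) check consistency, $\g^\star\in\partial f(\x^t)$, so that the resisting oracle is realizable by a genuine function of the class; (4) establish, via the quadratic-form lower bound, that no $\x^t$ is a $(\delta,\tfrac1{36})$-Goldstein stationary point of $h$, hence none is $(\delta,\epsilon)$-Goldstein stationary for $f$ with $\epsilon$ below a universal constant; (5) conclude $\bound_{\delta,\epsilon}(\algo,f)=+\infty$, and contrast with the randomized case (Remark~\ref{rem:randomized1stOracle}), where sampling inside $\BB_\delta(\cdot)$ lets the algorithm certify descent using only first-order information.

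The main obstacle is that, unlike in Theorem~\ref{Theorem:ZR}, the fixed sequence $\{\x^t\}$ need not satisfy $\supp\{\x^t\}\subseteq\{1\}$, so $e_2^\top\x^t$ need not vanish and the naive needle is no longer $O(1)$-Lipschitz; worse, $\algo$ may make its iterates accumulate, or even be dense, in a bounded region, in which case needles around distinct $\x^t$ overlap and cannot carry incompatible local directions, while it may also push iterates to infinity, which is in tension with keeping $f$ bounded yet retaining a uniform $\ge\epsilon$ descent direction on every $\delta$-ball along the trajectory. Handling this is the crux: one either replaces the disjoint needles by a single globally defined ``rotating-gradient'' field on a tube around the whole trajectory whose $\delta$-Goldstein subdifferential is uniformly bounded away from $0$ while the function stays bounded, or one makes the oracle adaptive so as to steer the realized iterates into a region where such a field can be built. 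I expect most of the work — and the only genuinely non-routine part — to be in making one of these two routes precise and verifying the resulting global function belongs to the stated class.
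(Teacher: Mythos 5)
Your proposal takes a genuinely different route from the paper's, and it has a gap you yourself identify but do not resolve. You attempt to carry over the ``needle'' mechanism from Theorem~\ref{Theorem:ZR}: commit the oracle to a constant subgradient $\g^\star$, which fixes the whole iterate sequence $\{\x^t\}$, then build a function whose $\delta$-Goldstein subdifferential is bounded away from zero at \emph{every} $\x^t$ simultaneously. You then flag, correctly, that this breaks for a generic deterministic algorithm: the iterates are not confined to a coordinate axis (Theorem~\ref{Theorem:ZR} gets this from the zero-respecting restriction; Theorem~\ref{Theorem:DET} pays for it with a $2d$-dimensional rotation via Proposition~\ref{prop:resisting}), the iterates may accumulate so that needles overlap, and they may diverge in a way that conflicts with keeping $f$ bounded while maintaining a uniform $\ge\epsilon$ descent direction on every $\delta$-ball along the trajectory. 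You do not close any of these holes, so as a proof the proposal is incomplete; moreover, even if completed it would only yield the conclusion for $\epsilon$ below a universal constant inherited from the needle estimate $\|\xi\|\ge\tfrac1{36}$, whereas the theorem claims the full range $0<\delta<\epsilon<1$.

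The insight you are missing, which makes the paper's proof short, is that a $1^\textnormal{st}$-only oracle leaves function \emph{values} completely unconstrained, so the adversary does not need to keep a descent direction alive along the whole (possibly infinite) trajectory at all. The paper works in one dimension: commit the oracle to return $\nabla f(\x_i)=1$ at every query, note that the deterministic algorithm's output point $\hat{\x}$ is thereby fixed, and construct $f$ to be exactly linear with slope $1$ on a $(\delta+\eta)$-neighborhood of $\hat{\x}$ (so $\partial_\delta f(\hat{\x})=\{1\}$ and $\hat{\x}$ is not $(\delta,\epsilon)$-Goldstein stationary for any $\epsilon<1$). Outside that neighborhood $f$ is plateaued at $\pm(\delta+\eta)$ with tiny piecewise-linear bumps at the queried points to enforce the consistency constraint $\nabla f(\x_i)=1$, and the bump radius is chosen smaller than the spacing of $Q\cup\{\hat{\x}\pm(\delta+\eta)\}$ so the pieces never collide. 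This keeps $f$ bounded in $[-1,1]$ trivially, and the true ``descent'' of the bounded function is hidden in the bumps, where the algorithm's output never looks because it never learned a function value. So the 1D construction plus the freedom in function values completely sidesteps the accumulation/divergence/support issues that stalled your plan; the needle machinery is the wrong tool for this theorem.
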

\begin{proof}
Suppose that we are accessible to only $1^\textnormal{st}$ oracle of a function $f : \br \to [-1, 1]$, the deterministic algorithm $\ACal$ can query a sequence of points $Q = \{\x_1, \x_2, \ldots, \x_m\}$ for any fixed and finite integer $m \geq 1$ starting from querying $\x_1$. For all of given query points $\x_i$, we always have $\nabla f(\x_i) = 1$. After receiving these uninformative answers (i.e., $\nabla f(\x) = 1$ for all $\x \in Q$), the algorithm returns the candidate solution $\hat{\x}$ for being a $(\delta, \epsilon)$-Goldstein stationary point. In addition, we remark that $\hat{\x}$ might not be in $Q$. 

It suffices to construct a 1-Lipschitz function $f$ such that $\nabla f(\x_i) = 1$ for all $1 \leq i \leq m$ and $\hat{\x}$ is not a $(\delta, \epsilon)$-Goldstein stationary point given that $0 < \delta < \epsilon < 1$. Our strategy to achieve the second goal is simple and intuitive. Indeed, we set $f(\x) = \x - \hat{\x}$ for all $\x \in [\hat{\x} - \delta + \eta, \hat{\x} + \delta + \eta]$ for some small positive value $\eta < 1 - \delta$ chosen so that $\hat{\x} + \delta + \eta \notin Q$ and $\hat{\x} - \delta - \eta \notin Q$. Using the definition of $\delta$-Goldstein subdifferential of $f$, we have $\partial_\delta f(\hat{\x}) = \{1\}$ and the norm of the minimal-norm element in $\partial_\delta f(\hat{\x})$ is 1. Since $\epsilon < 1$, we have $\hat{\x}$ is not a $(\delta, \epsilon)$-Goldstein stationary point for any $0 < \delta < \epsilon < 1$. The second goal is satisfied. Moreover, for all $\x \in Q \cap [\hat{\x} - \delta + \eta, \hat{\x} + \delta + \eta]$, we have $\nabla f(\x) = 1$. Thus, for these query points that lie in the interval $[\hat{\x} - \delta + \eta, \hat{\x} + \delta + \eta]$, the first goal is satisfied. 
  
It remains to define the function $f(\x)$ for any $\x \in (\hat{\x} + \delta + \eta, +\infty)$. The idea is simply keeping $f(\x) = \delta + \eta$ in this range while adding some small bumps to guarantee that $\nabla f'(\x) = 1$ is satisfied for all $\x \in Q \cap (\hat{\x} + \delta + \eta, +\infty)$. Let $\bar{Q} = Q \cup \{\hat{\x} - \delta + \eta, \hat{\x} + \delta + \eta\}$ and $r_1 = \frac{1}{10} \min_{\x,\x' \in \bar{Q}, \x \neq \x'}\{|\x - \x'|\}$, we define $r = \min\{r_1, \delta\}$ and 
\begin{equation*}
f(\x) = \left\{ 
\begin{aligned} 
\delta + \eta ~~~~~ & |\x - \x'| > r \textnormal{ for any } \x' \in Q, \\
\delta + \eta - \x ~~~~~ & \exists \x' \in Q \text{ s.t. } |\x - \x'| \leq r \textnormal{ and } \x \leq \x' - \tfrac{r}{2}, \\
\delta + \eta - r + \x ~~~~~ & \exists \x' \in Q \text{ s.t. } |\x - \x'| \leq r \textnormal{ and } \x > \x' - \tfrac{r}{2}. 
\end{aligned}
\right. 
\end{equation*}
We see from the above definition that $0 \leq f(\x) \leq \delta + \eta$ for all $\x \in (\hat{\x} + \delta + \eta, \infty)$ and $\nabla f(\x) = 1$ for all $\x \in Q \cap (\hat{\x} + \delta + \eta, \infty)$. Using the similar approach, we define $f(\x)$ for any $\x \in (-\infty,  \hat{\x} - \delta - \eta)$ as:
\begin{equation*}
f(\x) = \left\{ 
\begin{aligned}
- \delta - \eta ~~~~~ & |\x - \x'| > r \textnormal{ for any } \x' \in Q, \\
- \delta - \eta + \x ~~~~~ & \exists \x' \in Q \text{ s.t. } |\x - \x'| \le r \textnormal{ and } \x \leq \x' + \tfrac{r}{2}, \\
- \delta - \eta + r - \x ~~~~~ & \exists \x' \in Q \text{ s.t. } |\x - \x'| \le r \textnormal{ and } \x > \x' + \tfrac{r}{2}. 
\end{aligned}
\right. 
\end{equation*}
Putting these pieces together, we conclude that the 1-Lipschitz function $f$ satisfies that $\nabla f(\x_i) = 1$ for all $1 \leq i \leq m$ and $\hat{\x}$ is not a $(\delta, \epsilon)$-Goldstein stationary point for any $0 < \delta < \epsilon < 1$. This together with the fact that $m \geq 1$ is arbitrarily chosen yields the desired result. 
\end{proof}
\begin{remark} \label{rem:randomized1stOracle}
The finite-time convergence guarantee is achieved for the randomized algorithms with only $1^\textnormal{st}$ oracle. Indeed,~\citet[Theorem~3.1]{Lin-2022-Gradient} has shown that $\nabla f_\delta(\x) = \EE_{\su \in \mathbb{P}}[\nabla f(\su)] \in \partial_\delta f(\x)$ where $f_\delta(\x) = \EE_{\su \in \mathbb{P}}[f(\su)]$ and $\mathbb{P}$ is an uniform distribution on a unit $\ell_2$-ball centered in $\x$. Thus, it suffices to find an $\epsilon$-stationary point of $f_\delta$. Since $f$ is $L$-Lipschitz, $\nabla f(\su)$ is an unbiased estimator of $\nabla f_\delta(\x)$ and $\|\nabla f(\su)\| \leq L$ for any $\su \sim \mathbb{P}$. By using the similar arguments for proving~\citet[Theorem~3.2]{Lin-2022-Gradient}, we conclude that the required number of $1^\textnormal{st}$ oracle is bounded by $O(\sqrt{d}(L^4\epsilon^{-4} + \Delta L^3\delta^{-1}\epsilon^{-4}))$. 
\end{remark}

\section{Deterministic Smoothing and Complexity Analysis}\label{sec:smoothing}
In this section, we present a deterministic smoothing approach that achieves the smoothness parameter that is exponential in some parameters $M > 0$, and propose a deterministic algorithm that achieves the complexity bound of $\tilde{O}(M\delta^{-1}\epsilon^{-3})$. The scheme is inspired by discrete gradient method~\citep{Bagirov-2003-Continuous, Bagirov-2008-Discrete} and the asymptotic convergence of the generated iterates to a Goldstein stationary point of a locally Lipschitz function has been proven by~\citet{Mahdavi-2012-Effective}.  

\subsection{Deterministic smoothing of arithmetic circuits}
We introduce a smoothing technique that can be applied if we have the access to the \textit{arithmetic circuit} that captures the entire structure of the function. This model of representing functions as arithmetic circuits has been commonly used across diverse domains, from purely theoretical applications (e.g., computational complexity theory~\citep{Daskalakis-2011-Continuous, Fearnley-2021-Complexity}) to practical applications (e.g., deep neural networks~\citep{Lecun-2015-Deep, Goodfellow-2016-Deep}). In this context, we refer to it as a \textit{white box} model. Our results demonstrate that having such the access to the circuits is powerful enough to allow for deterministic smoothing.

\begin{definition}[Linear Arithmetic Circuits~\citep{Fearnley-2021-Complexity}]\label{def:AC}
We say that $\calC$ is a linear arithmetic circuit if it is represented as a directed acyclic graph with three different group of nodes: (i) input nodes; (ii) output nodes; and (iii) gate nodes. The gate node can be one of $\{+, \max, \times \zeta, \mathrm{const(c)}\}$, where $\mathrm{const(c)}$ stands for being a constant $c \in [-1, 1]$ and $\times \zeta$ stands for the multiplication by a constant $\zeta \in [-1, 1]$\footnote{We can generalize this to any bounded valued constants but we keep the range of constants to $[-1, 1]$ for simplicity.}. Moreover, a valid arithmetic circuit also satisfies the following conditions: 
\begin{enumerate}
\item There can be more than one input node, where each one has 0 incoming edges but any number of outgoing edges.  
\item The gate nodes in $\{+, \max\}$ have two incoming edges and any number of outgoing edges\footnote{We can generalize it to the case of any finite number of inputs. Focusing on two incoming edges does not lack the generality since we can always compose these gates to simulate addition and maximum with many inputs by just increasing the size and the depth of the circuit by a logarithmic factor.}. 
\item The gate node $\mathrm{const(c)}$ has 0 incoming edges but any number of outgoing edges. 
\item The gate node $\times \zeta$ has 1 incoming edge but any number of outgoing edges. 
\item There is only one output node that has 1 incoming edge and 0 outgoing edge.
\end{enumerate}
Throughout this section, we refer to $s(\calC)$ as the \textit{size} of $\calC$ (i.e., the number of nodes in the graph of $\calC$) and refer to $p(\calC)$ as the \textit{depth} of $\calC$ (i.e., the length of the longest path of the graph of $\calC$).
\end{definition}
The interpretation of $\calC$ as a function $f : \br^d \to \br$ is intuitive. The input nodes correspond to the input variables $x_1, \ldots, x_d$, then every gate node defines an arithmetic operation over these variables and we finally output the result $f(\x)$ in the output node. 

Despite the limited types of gates, the class of functions that we can characterize in this way is huge. In particular, we can use this representation to approximate up to error $\epsilon$ any efficiently computable function over a bounded but maybe exponential domain with the size that scales only as $\mathrm{poly}(\log(1/\epsilon))$ so that $\epsilon > 0$ can even be exponentially small; see the proof in~\citet[Appendix E]{Fearnley-2021-Complexity}. This means that we can represent the exponential function or the logarithmic function or any neural network using these type of circuits. The problem of finding a $(\delta, \epsilon)$-Goldstein stationary point of a function $f$ that is represented by an arithmetic circuit $\calC$ falls in the nonsmooth nonconvex optimization framework since the function that we can describe are nonsmooth due to the use of the $\max$ gate. To that end, the above framework captures a wide range of nonsmooth and nonconvex problems.

Our deterministic smoothing idea is to replace the $\max$ function with its smooth alternative $\softmax$ that we define below. This guarantees that the resulting function is smooth. If we carefully choose the smoothness parameter of $\softmax$, the original function and its deterministically smoothed version will output the same value up to an exponentially small error $\gamma > 0$. The main issue with this reduction is that the smoothness of the resulting function is exponentially large in some parameter $M$ and hence the well-known algorithms for smooth optimization (e.g., gradient descent) suffers from such exponentially large dependence on $M > 0$. This motivates the algorithm that we present in the next section that achieves the logarithmic dependence on the smoothness parameter. Combined this algorithm with the deterministic smoothing yields a new deterministic algorithm where the complexity bound for finding a $(\delta, \epsilon)$-Goldstein stationary point of the smoothed version will be $\tilde{O}(M\delta^{-1}\epsilon^{-3})$. 

It is worth remarking that our smoothing procedure is simple, implementable and inspired by the techniques that have been widely accepted in practice. To prove its efficiency, we need to impose the following assumption that is satisfied by the practical design of deep neural networks (see Remark \ref{rem:designNNs}).
\begin{assumption} \label{asp:recursiveLipschitz}
Let $f : \br^d \to \br$ that is represented as a linear arithmetic circuit $\calC$. Let also $v_1, \dots, v_n$ be the nodes that correspond to nodes in $\calC$ and $f_i$ be the function that will be computed if $v_i$ would the output of the arithmetic circuit. Then, we assume that the Lipschitzness $L_i$ of the functions $f_1, \dots, f_n$ is given recursively according to the following rules:
\begin{enumerate}
\item[-] \textbf{$\boldsymbol{v_i}$ is a $\boldsymbol{+}$ gate:} if $f_i = f_j + f_k$ then $L_i = L_j + L_k$. 
\item[-] \textbf{$\boldsymbol{v_i}$ is a $\boldsymbol{\max}$ gate:} if $f_i = \max\{f_j, f_k\}$ then $L_i = \max\{L_j, L_k\}$. 
\item[-] \textbf{$\boldsymbol{v_i}$ is a $\boldsymbol{\mathrm{const}(c)}$ gate:} $L_i = 0$. 
\item[-] \textbf{$\boldsymbol{v_i}$ is a $\boldsymbol{\times \zeta}$ gate:} if $f_i = \zeta \cdot f_j$ then $L_i = |\zeta| \cdot L_j$. 
\item[-] \textbf{$\boldsymbol{v_i}$ is a input node:} $L_i = 1$.
\end{enumerate}
In particular, we assume that the Lipschitzness of $f$, which is equal to the evaluation of the last node, is given by the above recursion. In this case, we say that $f$ is \textit{$L$-recursively Lipschitz}.
\end{assumption}
\begin{remark}\label{rem:designNNs}
We remark that that the recursive rules used in Assumption \ref{asp:recursiveLipschitz} always provide an upper bound on $L > 0$ which could be however much larger than $L$ in the worst-case. To bypass these bad cases, we impose Assumption \ref{asp:recursiveLipschitz} which is crucial to the proof of Theorem \ref{thm:circuitSmoothing}. Notably, this assumption is not theoretically artificial but can be satisfied by the generic construction of neural networks in the context of deep learning. Indeed, the $\boldsymbol{+}$ and $\boldsymbol{\times \zeta}$ gates are often consecutively used in the construction of neural networks and leads to $L_i = |\zeta_j| \cdot L_j + |\zeta_k| \cdot L_k$. To stabilize the training, practitioners often force $L_i = \min\{|\zeta_j| \cdot L_j + |\zeta_k| \cdot L_k, 1\}$ by employing the normalization techniques~\citep{Ioffe-2015-Batch, Miyato-2018-Spectral}. In addition, both $\boldsymbol{\max}$ and $\textnormal{\bf softmax}$ gates guarantee that $L_i = 1$ if $L_j = L_k = 1$. Thus, we have $L = 1$ and the induced function $f: \br^d \mapsto \br$ is $1$-recursively Lipschitz. 
\end{remark}

\begin{theorem}\label{thm:circuitSmoothing}
Let $f : \br^d \to \mathbb{R}$ be a $L$-recursively Lipschitz function (see Assumption \ref{asp:recursiveLipschitz}), represented by a linear arithmetic circuit $\calC$. For every $N \in \br$, we can construct a function $g : \br^d \to \br$ such that:
\begin{enumerate}
\item $|f(\x) - g(\x)| \le 2^{-N}$. 
\item $g$ is $L$-Lipschitz. 
\item $g$ is $2^{3 s(\calC) + N}$-smooth.
\end{enumerate}
\end{theorem}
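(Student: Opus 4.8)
The plan is to build $g$ by replacing every $\max$ gate of $\calC$ with a $\softmax_\beta$ gate, where $\softmax_\beta(a,b):=\tfrac{1}{\beta}\ln(e^{\beta a}+e^{\beta b})$ and $\beta:=2^{s(\calC)+N}$, and leaving all other gates unchanged. Since $g$ is then a composition of affine maps and the $C^\infty$ function $\softmax_\beta$, it is $C^\infty$, so it suffices to bound $\|\nabla^2 g(\x)\|_{\mathrm{op}}$ uniformly over $\x$ in order to certify smoothness (via the mean value inequality). I would first isolate three elementary facts about $\softmax_\beta$: (a) $\max\{a,b\}\le\softmax_\beta(a,b)\le\max\{a,b\}+\tfrac{\ln 2}{\beta}$; (b) $\nabla\softmax_\beta(a,b)=(p_1,p_2)$ with $p_1,p_2\ge 0$ and $p_1+p_2=1$; and (c) $\nabla^2\softmax_\beta(a,b)=\beta p_1p_2\,(e_1-e_2)(e_1-e_2)^\top$, a rank-one matrix of operator norm $2\beta p_1p_2\le\beta/2$.

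Next I would run a single induction over the nodes $v_1,\dots,v_n$ of $\calC$ in topological order. Write $g_i$ for the function computed at $v_i$ after the substitution, $L_i$ for the recursive Lipschitz quantity of Assumption~\ref{asp:recursiveLipschitz}, and $s_i$ for the number of nodes in the sub-circuit rooted at $v_i$; note $s_i\le s(\calC)$ and, whenever $v_i$ has predecessors $v_j$ (and $v_k$), $s_i\ge 1+\max\{s_j,s_k\}$, hence $2^{s_j}+2^{s_k}\le 2^{s_i}$ and $2^{2s_j}+2^{2s_k}\le 2^{2s_i}$. The induction simultaneously establishes, for every $\x$: (i) $\|\nabla g_i(\x)\|\le L_i$, and moreover $L_i\le 2^{s_i}$; (ii) $|g_i(\x)-f_i(\x)|\le\tfrac{\ln 2}{\beta}c_i$ for some $c_i\le 2^{s_i}$; (iii) $\|\nabla^2 g_i(\x)\|_{\mathrm{op}}\le\beta\,2^{2s_i}$. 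Input and constant nodes make all three trivial; for a $+$ gate, gradients, Hessians and errors add and the sub-circuit-size inequalities above absorb the growth; for a $\times\zeta$ gate everything is scaled by $|\zeta|\le 1$. For a $\softmax$ gate $g_i=\softmax_\beta(g_j,g_k)$, fact (b) gives $\nabla g_i=p_1\nabla g_j+p_2\nabla g_k$, hence $\|\nabla g_i\|\le\max\{L_j,L_k\}=L_i$, and fact (a) gives $c_i\le 1+\max\{c_j,c_k\}$; the second-order chain rule yields $\nabla^2 g_i=J^\top(\nabla^2\softmax_\beta)\,J+p_1\nabla^2 g_j+p_2\nabla^2 g_k$ with $J$ the $2\times d$ matrix of rows $\nabla g_j^\top,\nabla g_k^\top$, so using fact (c) and (i),
\begin{align*}
\|\nabla^2 g_i\|_{\mathrm{op}} &\le \tfrac{\beta}{2}\big(\|\nabla g_j\|^2+\|\nabla g_k\|^2\big)+\max\{\|\nabla^2 g_j\|_{\mathrm{op}},\|\nabla^2 g_k\|_{\mathrm{op}}\} \\
&\le \tfrac{\beta}{2}\big(2^{2s_j}+2^{2s_k}\big)+\beta\,2^{2\max\{s_j,s_k\}} \le \beta\,2^{2s_i}.
\end{align*}

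Finally, reading off the bounds at the output node $v_n$ (with $s_n\le s(\calC)$) gives all three conclusions: $g$ is $L_n=L$-Lipschitz, using that by hypothesis the recursively defined constant of $f$ is exactly $L$ (this is the one place Assumption~\ref{asp:recursiveLipschitz} is essential); $|f(\x)-g(\x)|\le\tfrac{\ln 2}{\beta}\,2^{s(\calC)}=\ln 2\cdot 2^{-N}<2^{-N}$ by the choice $\beta=2^{s(\calC)+N}$; and $\|\nabla^2 g\|_{\mathrm{op}}\le\beta\,2^{2s(\calC)}=2^{3s(\calC)+N}$, i.e.\ $g$ is $2^{3s(\calC)+N}$-smooth. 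The delicate point is the constant bookkeeping in step (iii): one must check that the recursion $s_i\ge 1+\max\{s_j,s_k\}$ is exactly strong enough that the $\tfrac{\beta}{2}(\|\nabla g_j\|^2+\|\nabla g_k\|^2)$ contribution — which itself can be as large as $\beta\,2^{2s_i}$ because gradient norms grow through chains of $+$ gates — does not additionally accumulate across gates and push the exponent past $3s(\calC)+N$. Everything else (the softmax estimates, the chain rule, the induction over the four remaining gate types, and the passage from a uniform Hessian bound to gradient-Lipschitzness) is routine.
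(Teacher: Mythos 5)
Your proof is correct and follows essentially the same route as the paper's: replace every $\max$ gate by $\softmax_\beta$ with $\beta = 2^{s(\calC)+N}$, then induct over the circuit in topological order, tracking at each node the approximation error, the Lipschitz constant, and the Hessian (smoothness) bound. Your bookkeeping via sub-circuit sizes $s_i$ and the explicit rank-one Hessian of $\softmax_\beta$ is a minor tightening of the paper's argument (which uses the raw node index and the crude bound $L_j \le 2^j$), and your positive-exponent choice of $\beta$ is the correct reading of the paper's stated $a = 2^{-N - s(\calC)}$, which appears to contain a sign typo.
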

\begin{proof}
We use exactly the same arithmetic circuit for the evaluation of $g$ except for replacing all the $\boldsymbol{\max}$ gates with the $\textnormal{\bf softmax}$ gates: $\softmax_{a}(z_1, z_2) = \frac{1}{a} \ln(\exp(a \cdot z_1) + \exp(a \cdot z_2))$. Before the formal argument, we summarize the properties of softmax gates:
\begin{lemma} \label{lem:softmax}
We have that (i) $|\max(z_1, z_2) - \softmax_{a}(z_1, z_2)| \leq \frac{1}{a}$, (ii) $\softmax_a(\cdot)$ is $1$-Lipschitz, and (iii) $\softmax_a$ is $\frac{1}{2}a$-smooth.
\end{lemma}
\begin{proof}
For the first part, we assume without loss of generality that $z_1 \geq z_2$ and obtain that $h(z_2) = z_1 - \softmax_{a}(z_1, z_2) = |\max(z_1, z_2) - \softmax_{a}(z_1, z_2)|$. Here the last equality holds since $z_1 \geq z_2$. Since the derivative of $h$ with respect to $z_2$ is positive, we have the maximum value for $h$ will be achieved only when $z_2 = z_1$. In addition, $\softmax_a(z_1, z_1) = z_1 + \frac{\ln(2)}{a}$. Putting these pieces together yields the first part of the lemma. The second and third parts are just a matter of algebraic calculations.
\end{proof}
We are ready to define a topological sorting of $\calC$ and start to compare the evaluation of the nodes in the circuit of $f$ and in the circuit of $g$ under Assumption~\ref{asp:recursiveLipschitz}. Let $f_i$ be the function evaluated in the node $i$ of the circuit of $f$ and $g_i$ the function evaluated in the node $i$ of the circuit of $g$. Let also $L_i > 0$ be the corresponding Lipschitzness parameter of $f_i$. 

The input nodes and the constant gates for $f$ and $g$ both have the same value and the simple gradient being $e_k$ for some $k$. Thus, they are $0$-smooth. Also, the input nodes are $1$-Lipschitz and the constant nodes are $0$-Lipschitz. These input gates are the basis of our induction. Our inductive hypothesis is: for all $j < i$, we assume that (i) $|f_j(\x) - g_j(\x)| \leq \gamma_j$ for all $\x \in \br^d$, (ii) $g_j$ is $S_j$-smooth, and (iii) $g_j$ is $L_j$-Lipschitz. Then, we try to prove that (i) $|f_i(\x) - g_i(\x)| \leq \gamma_i$ for all $\x \in \br^d$, (ii) $g_i$ is $S_i$-smooth, and (iii) $g_i$ is $L_i$-Lipschitz. In the following, we consider different cases for the type of node $i$: 
\begin{itemize}
\item \textbf{output node.} In this case, the value and Lipschitzness of node $i$ is the same as of a node $j < i$. Thus, we have $|f_i(\x) - g_i(\x)| \le \gamma_j \triangleq \gamma_i$ and obtain that $S_i = S_j$ and $L_i = L_j$.
\item \textbf{$\times \zeta$ node.} There exists $j < i$ such that $f_i(\x) = \zeta f_j(\x)$ and $g_i(\x) = \zeta g_j(\x)$ which means that $|f_i(\x) - g_i(\x)| \le |\zeta| \cdot \gamma_j \le \gamma_j \triangleq \gamma_i$. Also, $S_i \le |\zeta| \cdot S_j \le S_j$. Then, the Lipschitzness of $g_i$ is upper bounded by $|\zeta| \cdot L_j$ which is equal to $L_i$ by Assumption \ref{asp:recursiveLipschitz}. Thus, $g_i$ is $L_i$-Lipschitz.
\item \textbf{$+$ node.} There exist $j, k < i$ such that $f_i(\x) = f_j(\x) + f_k(\x)$ and $g_i(\x) = g_j(\x) + g_k(\x)$ which means that $|f_i(\x) - g_i(\x)| \le \gamma_j + \gamma_k \triangleq \gamma_i$. Also, $S_i \le S_j + S_k$. Then, the Lipschitzness of $g_i$ is upper bounded by $L_j + L_k$ which is equal to $L_i$ by Assumption \ref{asp:recursiveLipschitz}. Thus, $g_i$ is $L_i$-Lipschitz.
\item \textbf{$\max$ node.} There exist $j, k < i$ such that $f_i(\x) = \max(f_j(\x), f_k(\x))$ and $g_i(\x) = \softmax(g_j(\x), g_k(\x))$. Using Lemma \ref{lem:softmax}, the triangle inequality and the fact that $\max$ is $1$-Lipschitz, we have $|f_i(\x) - g_i(\x)| \le \frac{1}{a} + \gamma_j + \gamma_k \triangleq \gamma_i$. The next is to bound the smoothness $S_i$. By definition, we have
\begin{equation*}
\nabla g_i(\x) = [\nabla \softmax_a (g_j(\x), g_k(\x))]^\top[\nabla g_j(\x) ~~ \nabla g_k(\x)]. 
\end{equation*}
This implies that 
\begin{eqnarray*}
\lefteqn{\norm{\nabla g_i(\x) - \nabla g_i(\y)}} \\ 
& = & \|[\nabla \softmax_a (g_j(\x), g_k(\x))]^\top [\nabla g_j(\x) ~~ \nabla g_k(\x)] - [\nabla \softmax_a (g_j(\y), g_k(\y))]^\top[\nabla g_j(\y) ~~ \nabla g_k(\y)]\| \\
& = & \|[\nabla \softmax_a (g_j(\x), g_k(\x))]^\top [\nabla g_j(\x) ~~ \nabla g_k(\x)] - [\nabla \softmax_a (g_j(\x), g_k(\x))]^\top [\nabla g_j(\y) ~~ \nabla g_k(\y)]\| \\
& & + \|[\nabla \softmax_a (g_j(\x), g_k(\x))]^\top [\nabla g_j(\y) ~~ \nabla g_k(\y)] - [\nabla \softmax_a (g_j(\y), g_k(\y))]^\top[\nabla g_j(\y) ~~ \nabla g_k(\y)]\| \\
& \leq & (S_j + S_k + \tfrac{1}{2}a(L_j + L_k))\|\x - \y\|.  
\end{eqnarray*}
The last inequality holds true since $\softmax$ is $1$-Lipschitz and $\frac{a}{2}$-smooth and the Lipschitz constants of $\nabla g_j$ and $\nabla g_k$ are $L_j$ and $L_k$. A trivial upper bound from~\citet{Fearnley-2021-Complexity} implies that $L_j \leq 2^j$ for all $j$. Thus, we have $S_i \le a \cdot 2^{i - 1} + S_j + S_k$. Finally, due to the fact that $\softmax$ is $1$-Lipschitz, the Lipschitzness of $g_i$ is upper bounded by $\max\{L_j, L_k\}$ which is equal to $L_i$ by Assumption \ref{asp:recursiveLipschitz}. Thus, $g_i$ is $L_i$-Lipschitz.
\end{itemize}
Note that the sequence of errors $\{\gamma_i\}_{i \geq 1}$ is increasing. Given all above cases that we consider,  we have $\gamma_i \leq \frac{1}{a} + \gamma_j + \gamma_k \leq \frac{1}{a} + 2 \gamma_{i - 1}$ which implies that $\gamma_i \le \frac{2^i}{a}$. Thus, we have $|f(\x) - g(\x)| \leq \frac{2^{s(\calC)}}{a}$. The similar argument guarantees that $S_i \leq a \cdot 2^{s(\calC)} + 2 S_{i - 1}$ and hence $S_i \le a \cdot 2^{s(\calC) + i}$. Thus, we have $S_i \leq a \cdot 2^{2 s(\calC)}$. If we choose $a = 2^{- N - s(\calC)}$, Part 1 and Part 3 of Theorem~\ref{thm:circuitSmoothing} follow. In addition, since $g_i$ is $L_i$-Lipschitz for all $i \geq 1$ where $L_i$ is the Lipschitz constant of $f_i$ and $f$ is $L$-Lipschitz, we have $g$ is $L$-Lipschitz. 
\end{proof}
\begin{remark}
Theorem~\ref{thm:circuitSmoothing} illustrates that the smoothing of a Lipschitz function can be done in a simple and deterministic manner if we are accessible to its representation using linear arithmetic circuits. However, the resulting smooth function has an exponentially large smoothness parameter. Fortunately, we show in the next subsection that it is not a matter since there exists a deterministic algorithm that can achieve the logarithmic smoothness dependence. 
\end{remark}

\subsection{Deterministic algorithm with complexity bound guarantee}
Focusing on the smooth and nonconvex optimization problems where $f \in \FCal_d(\Delta, L)$ is $\Theta(2^M)$-smooth for some parameter $M > 0$, we develop a simple deterministic algorithm that achieves the complexity bound of $\tilde{O}(M\delta^{-1}\epsilon^{-3})$ in terms of $1^\textnormal{st}$ and $0^\textnormal{th}$ oracles. 

We first give a brief overview of Goldstein's method. Indeed, let $\partial_\delta f(\x) := \textnormal{conv}(\cup_{\y \in \BB_\delta(\x)}  \partial f(\y))$ be the $\delta$-Goldstein subdifferential of a Lipschitz function $f$ at $\x$, we define the minimal-norm element: 
\begin{equation*}
\g(\x) = \argmin\left\{\|\g\|: \g \in \partial_\delta f(\x)\right\}. 
\end{equation*}
At each iteration of Goldstein's method, we update $\x^+ \leftarrow \x - \delta(\g(\x)/\|\g(\x)\|)$. Since $f$ is differentiable, the mean-value theorem implies that 
\begin{equation*}
f(\x^+) - f(\x) = f(\x - \delta\tfrac{\g(\x)}{\|\g(\x)\|}) - f(\x) = -\delta\xi^\top\left(\tfrac{\g(\x)}{\|\g(\x)\|}\right), \quad \textnormal{for some } \xi \in \partial_\delta f(\x).  
\end{equation*}
Since $\g(\x)$ is the minimal-norm element in $\partial_\delta f(\x)$, we have $(\xi - \g(\x))^\top\g(\x) \geq 0$. This implies that 
\begin{equation*}
f(\x^+) - f(\x) \leq -\delta\left(\tfrac{\|\g(\x)\|^2}{\|\g(\x)\|}\right) = -\delta\|\g(\x)\|. 
\end{equation*}
As such, the number of iterations to find a $(\delta, \epsilon)$-Goldstein stationary point is bounded by $O(\Delta\delta^{-1}\epsilon^{-1})$. The drawback of Goldstein's method is that we can not compute the minimal-norm element in $\partial_\delta f(\x)$ exactly in general. While all of randomized first-order algorithms~\citep{Zhang-2020-Complexity, Davis-2022-Gradient, Tian-2022-Finite} employ different strategies to approximate $\partial_\delta f(\x)$, we consider approximating $\partial_\delta f(\x)$ using a convex hull of the finite number of elements in $\partial_\delta f(\x)$ \textit{deterministically}. More specifically, let $W = \{\g_1, \g_2, \ldots, \g_k\} \subseteq \partial_\delta f(\x)$, we consider using $\conv(W)$ as an approximation of $\partial_\delta f(\x)$ and compute 
\begin{equation*}
\g(\x) = \argmin\left\{\|\g\|: \g \in \conv(W)\right\}. 
\end{equation*}
If $k$ is sufficiently large and $W$ approximates $\partial_\delta f(\x)$ well, we have the following condition holds true: 
\begin{equation}\label{condition:descent}
f(\x - \delta\tfrac{\g(\x)}{\|\g(\x)\|}) - f(\x) \leq - \tfrac{\delta}{2}\|\g(\x)\|. 
\end{equation}
Otherwise, $W$ is not sufficiently large and we need to improve the approximation of $\partial_\delta f(\x)$ by updating $W \leftarrow W \cup \{\g_{\textnormal{new}}\}$ where $\g_{\textnormal{new}} \in \partial_\delta f(\x)$ and $\g_{\textnormal{new}} \notin \conv(W)$. 
\begin{algorithm}[!t]
\begin{algorithmic}\caption{\textsf{Binary-Search}($\delta$, $\nabla f(\cdot)$, $\g_0$, $\x$)}\label{Alg:BS}
\STATE \textbf{Initialization:} Set $b \leftarrow \delta$, $a \leftarrow 0$ and $t \leftarrow b$.
\REPEAT
\STATE Compute $\nabla h(t) = -(\nabla f(\x - t\tfrac{\g_0}{\|\g_0\|}))^\top\tfrac{\g_0}{\|\g_0\|} + \tfrac{1}{2}\|\g_0\|$. 
\IF{$\nabla h(t) < -\frac{\epsilon}{4}$}
\STATE Set $t \leftarrow \frac{a+b}{2}$.  
\ENDIF
\IF{$h(b) > h(t)$}
\STATE Set $a \leftarrow t$.  
\ELSE
\STATE Set $b \leftarrow t$. 
\ENDIF
\UNTIL{$\nabla h(t) \geq -\frac{\epsilon}{4}$}
\STATE \textbf{Output:} $\nabla f(\x - t\frac{\g_0}{\|\g_0\|})$. 
\end{algorithmic}
\end{algorithm}
The next step is to study how to select $\g_{\textnormal{new}}$ and bound the number of $1^\textnormal{st}$ and $0^\textnormal{th}$ oracles required to select it. In particular, suppose that Eq.~\eqref{condition:descent} does not hold true, i.e., $f(\x - \delta\tfrac{\g(\x)}{\|\g(\x)\|}) - f(\x) > - \frac{\delta}{2}\|\g(\x)\|$, we define the one-dimensional function $h: \br \mapsto \br$ as follows, 
\begin{equation*}
h(t) = f(\x - t\tfrac{\g(\x)}{\|\g(\x)\|}) - f(\x) + \tfrac{t}{2}\|\g(\x)\|. 
\end{equation*} 
Since $\g(\x)$ is the minimum-norm element in $\conv(W)$, we have $(\xi - \g(\x))^\top\g(\x) \geq 0$ for all $\xi \in \conv(W)$. Equivalently, $\xi^\top\g(\x) \geq \|\g(\x)\|^2$. Thus, if we compute $\g_{\textnormal{new}} \in \partial_\delta f(\x)$ such that $\g_{\textnormal{new}}^\top\g(\x) \leq \frac{3}{4}\|\g(\x)\|^2$, we have $\g_{\textnormal{new}} \notin \conv(W)$. 

Since $f$ is $\Theta(2^M)$-smooth for some parameter $M > 0$, we have $h$ is $\Theta(2^M)$-smooth. Suppose that there exists $t_0 \in (0, \delta)$ satisfying $\nabla h(t_0) \geq -\frac{\epsilon}{4}$, we obtain from $\|\g(\x)\| \geq \epsilon$ that 
\begin{equation*}
-(\nabla f(\x - t_0\tfrac{\g(\x)}{\|\g(\x)\|}))^\top\tfrac{\g(\x)}{\|\g(\x)\|} + \tfrac{1}{2}\|\g(\x)\| \geq -\tfrac{\epsilon}{4} \geq -\tfrac{\|\g(\x)\|}{4}. 
\end{equation*}
Equivalently, we have 
\begin{equation*}
(\nabla f(\x - t_0\tfrac{\g(\x)}{\|\g(\x)\|}))^\top\g(\x) \leq \tfrac{3}{4}\|\g(\x)\|^2. 
\end{equation*}
It suffices to find $t_0 \in (0, \delta)$ satisfying $\nabla h(t_0) \geq -\frac{\epsilon}{4}$ and set $\g_{\textnormal{new}} = \nabla f(\x - t_0\tfrac{\g(\x)}{\|\g(\x)\|})$. By the definition, we have $h(0) = 0$. Since $f(\x - \delta\tfrac{\g(\x)}{\|\g(\x)\|}) - f(\x) > - \frac{\delta}{2}\|\g(\x)\|$, we have $h(\delta) > 0$. Putting these pieces together yields the existence of $t_0 \in (0, \delta)$ satisfying $\nabla h(t_0) \geq -\frac{\epsilon}{4}$. In addition, $f$ is $\Theta(2^M)$-smooth for some parameter $M > 0$. Then, the binary search scheme can find such $t_0 \in (0, \delta)$ within $\Theta(M\log(\delta/\epsilon))$ number of $1^\textnormal{st}$ and $0^\textnormal{th}$ oracles. For the sake of completeness, we summarize the scheme in Algorithm~\ref{Alg:BS}. 
\begin{algorithm}[!t]
\begin{algorithmic}[1]\caption{\textsf{Modified-Goldstein-SG}($\x_0$, $\delta$, $\epsilon$, $T$)}\label{Alg:Full}
\STATE \textbf{Input:} initial point $\x_0 \in \br^d$, tolerances $\delta, \epsilon \in (0, 1)$ and iteration number $T$. 
\STATE \textbf{Initialization:} Set $\x_0 \in \br^d$. 
\FOR{$k = 0, 1, 2, \ldots, T-1$}
\STATE Compute $\g_\textnormal{initial} \in \partial f(\x_k)$.  
\STATE Set $W \leftarrow \{\g_\textnormal{initial}\}$. 
\REPEAT
\STATE Set $\g(\x_k) \leftarrow \argmin\left\{\|\g\|: \g \in \conv(W)\right\}$. 
\STATE Set $\g_{\textnormal{new}} \leftarrow \textsf{Binary-Search}(\delta, \nabla f(\cdot), \g(\x_k), \x_k)$. 
\STATE Set $W \leftarrow W \cup \{\g_{\textnormal{new}}\}$. 
\UNTIL{$f(\x_k - \delta\tfrac{\g(\x_k)}{\|\g(\x_k)\|}) - f(\x_k) \leq - \tfrac{\delta}{2}\|\g(\x_k)\|$ or $\|\g(\x_k)\| \leq \epsilon$.}
\IF{$\|\g(\x_k)\| \leq \epsilon$}
\STATE \textbf{Stop}.
\ELSE
\STATE $\x_{k+1} \leftarrow \x_k - \delta\frac{\g(\x_k)}{\|\g(\x_k)\|}$. 
\ENDIF
\ENDFOR
\STATE \textbf{Output:} $\x_k$. 
\end{algorithmic}
\end{algorithm}

It remains to bound the number of selecting $\g_{\textnormal{new}}$ required to construct $W$ such that it approximates $\partial_\delta f(\x)$ well and Eq.~\eqref{condition:descent} holds true. In particular, suppose that Eq.~\eqref{condition:descent} does not hold true, we can compute $\g_{\textnormal{new}} \in \partial_\delta f(\x)$ such that $\g_{\textnormal{new}}^\top\g(\x) \leq \frac{3}{4}\|\g(\x)\|^2$. For simplicity, we define 
\begin{equation*}
\g_\textnormal{new}(\x) = \argmin\left\{\|\g\|: \g \in \conv(W \cup \{\g_{\textnormal{new}}\})\right\}. 
\end{equation*}
For all $t \in (0, 1)$, we have 
\begin{equation*}
\|\g_\textnormal{new}(\x)\|^2 \leq \|\g(\x) + t(\g_{\textnormal{new}} - \g(\x))\|^2 = \|\g(\x)\|^2 + 2t\g(\x)^\top(\g_{\textnormal{new}} - \g(\x)) + t^2\|\g_{\textnormal{new}} - \g(\x)\|^2. 
\end{equation*}
Since $f$ is $L$-Lipschitz, we have $\|\g_{\textnormal{new}} - \g(\x)\|^2 \leq 4L^2$. Putting these pieces together yields that 
\begin{equation*}
\|\g_\textnormal{new}(\x)\|^2 \leq (1 - \tfrac{t}{2})\|\g(\x)\|^2 + 4t^2L^2. 
\end{equation*}
Since $\|\g(\x)\| \leq L$, we set $t = \frac{\|\g(\x)\|^2}{16L^2} \in (0, 1)$. This together with the fact that $\|\g(\x)\| \geq \epsilon$ yields that 
\begin{equation*}
\|\g_\textnormal{new}(\x)\|^2 \leq (1 - \tfrac{\epsilon^2}{64L^2})\|\g(\x)\|^2. 
\end{equation*}
The above contraction inequality together with the facts that $\epsilon \leq \|\g_\textnormal{new}(\x)\|, \|\g(\x)\| \leq L$ implies that the number of selecting $\g_{\textnormal{new}}$ required to get an approximation of $\partial_\delta f(\x)$ satisfying Eq.~\eqref{condition:descent} is bounded by $O(L^2\epsilon^{-2}\log(L/\epsilon))$. We present the detailed scheme of our algorithm in Algorithm~\ref{Alg:Full} and summarize our results in the following theorem. 
\begin{theorem}\label{Theorem:SMOOTH}
Suppose that $f \in \FCal_d(\Delta, L)$ is $\Theta(2^M)$-smooth for some parameter $M > 0$ and let $\epsilon, \delta \in (0, 1)$, there exists some $T > 0$ such that the output $\hat{\x} = \textsf{Modified-Goldstein-SG}(\x_0, \delta, \epsilon, T)$ will satisfy that $\hat{\x}$ is a $(\delta, \epsilon)$-Goldstein stationary point and the required number of $1^\textnormal{st}$ and $0^\textnormal{th}$ oracles is bounded by
\begin{equation*}
O\left(\frac{\Delta L^2 M}{\delta\epsilon^3}\log\left(\frac{L}{\epsilon}\right)\log\left(\frac{\delta}{\epsilon}\right)\right), 
\end{equation*}
where the problem parameters $\Delta, L > 0$ are both independent of the dimension $d \geq 1$. 
\end{theorem}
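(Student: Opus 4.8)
The plan is to assemble the three estimates already developed above --- the outer descent of Goldstein's method, the contraction of the minimal-norm element when $W$ grows, and the cost of the line search in Algorithm~\ref{Alg:BS} --- into a single bound, verifying along the way that \textsf{Modified-Goldstein-SG} (Algorithm~\ref{Alg:Full}) is well defined and terminates. First I would observe that whenever the algorithm halts it returns a $(\delta,\epsilon)$-Goldstein stationary point: the returned $\x_k$ satisfies $\g(\x_k)\in\conv(W)\subseteq\partial_\delta f(\x_k)$ and $\|\g(\x_k)\|\le\epsilon$, hence $\min\{\|\g\|:\g\in\partial_\delta f(\x_k)\}\le\epsilon$. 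Next, every outer iteration that does not halt exits its inner loop through Eq.~\eqref{condition:descent} with $\|\g(\x_k)\|>\epsilon$, so $f(\x_{k+1})-f(\x_k)\le-\tfrac{\delta}{2}\|\g(\x_k)\|<-\tfrac{\delta\epsilon}{2}$. Since $f(\x_0)-\inf_{\x}f(\x)\le\Delta$, there can be at most $T=\lceil 2\Delta/(\delta\epsilon)\rceil$ such iterations, so with this $T$ the algorithm must stop by entering the branch $\|\g(\x_k)\|\le\epsilon$; this reproduces the $O(\Delta\delta^{-1}\epsilon^{-1})$ outer-iteration count of Goldstein's method.

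\textbf{Cost of one inner iteration.} Fix an outer index $k$ and suppose Eq.~\eqref{condition:descent} fails, i.e.\ $h(\delta)>0=h(0)$ for $h(t)=f(\x_k-t\g(\x_k)/\|\g(\x_k)\|)-f(\x_k)+\tfrac{t}{2}\|\g(\x_k)\|$. As $f$ and hence $h$ is differentiable, the mean value theorem produces $t^\star\in(0,\delta)$ with $\nabla h(t^\star)=h(\delta)/\delta>0\ge-\tfrac{\epsilon}{4}$, so \textsf{Binary-Search} has a valid target. I would then verify the loop invariant of Algorithm~\ref{Alg:BS} --- the maintained interval $[a,b]$ always has $h(b)>h(a)$, so by the mean value theorem it always traps a point of positive derivative --- and combine it with the $\Theta(2^M)$-smoothness of $h$: once $b-a$ drops below $\tfrac{\epsilon}{4}\cdot\Theta(2^{-M})$, every point of $[a,b]$ has $\nabla h\ge-\tfrac{\epsilon}{4}$ and the loop exits. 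Since the interval is roughly halved each round, this happens after $O(\log(\delta 2^{M}/\epsilon))=O(M\log(\delta/\epsilon))$ rounds, each using $O(1)$ queries to the $1^{\textnormal{st}}$ and $0^{\textnormal{th}}$ oracles. The returned point yields, as computed above, $\g_{\textnormal{new}}=\nabla f(\x_k-t_0\g(\x_k)/\|\g(\x_k)\|)\in\partial_\delta f(\x_k)$ with $\g_{\textnormal{new}}^\top\g(\x_k)\le\tfrac34\|\g(\x_k)\|^2$; since every $\xi\in\conv(W)$ obeys $\xi^\top\g(\x_k)\ge\|\g(\x_k)\|^2$, this forces $\g_{\textnormal{new}}\notin\conv(W)$, so $W$ genuinely grows.

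\textbf{Number of inner iterations and conclusion.} Each newly added $\g_{\textnormal{new}}$ yields the contraction $\|\g_{\textnormal{new}}(\x_k)\|^2\le(1-\tfrac{\epsilon^2}{64L^2})\|\g(\x_k)\|^2$ established above (using $\|\g_{\textnormal{new}}-\g(\x_k)\|\le 2L$ and the step $t=\|\g(\x_k)\|^2/(16L^2)$), and the inner loop runs only while $\epsilon\le\|\g(\x_k)\|\le L$; hence it terminates --- via Eq.~\eqref{condition:descent} or via $\|\g(\x_k)\|\le\epsilon$ --- after at most $O(L^2\epsilon^{-2}\log(L/\epsilon))$ rounds. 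Multiplying $O(\Delta\delta^{-1}\epsilon^{-1})$ outer iterations, $O(L^2\epsilon^{-2}\log(L/\epsilon))$ inner rounds per outer iteration, and $O(M\log(\delta/\epsilon))$ queries per inner round gives the claimed $O(\Delta L^2 M\,\delta^{-1}\epsilon^{-3}\log(L/\epsilon)\log(\delta/\epsilon))$, and the output is a $(\delta,\epsilon)$-Goldstein stationary point by the first step.

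I expect the crux to be the analysis of \textsf{Binary-Search}. One must (a) be sure the target set $\{t\in(0,\delta):\nabla h(t)\ge-\epsilon/4\}$ is genuinely non-empty and stays inside the maintained interval for the whole run, and (b) show that $\Theta(2^M)$-smoothness converts interval width into derivative control, so that $O(M\log(\delta/\epsilon))$ bisections --- rather than a number polynomial in the smoothness $2^M$ --- suffice; this logarithmic-in-smoothness line search is precisely what lets the deterministic scheme avoid the $2^M$ blow-up that a naive smooth first-order method would suffer after the deterministic smoothing of Theorem~\ref{thm:circuitSmoothing}. A secondary point is pinning down the contraction constant in the inner loop and checking that the minimal-norm projection onto $\conv(W)$ is monotone non-increasing as $W$ grows.
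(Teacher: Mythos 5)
Your proof is correct and follows the same three-piece decomposition as the paper's argument (outer descent count $O(\Delta\delta^{-1}\epsilon^{-1})$, inner contraction count $O(L^2\epsilon^{-2}\log(L/\epsilon))$, and binary-search cost $O(M\log(\delta/\epsilon))$), which the paper develops in the text preceding the theorem and then assembles tersely. You supply useful extra detail that the paper leaves implicit --- the loop invariant $h(b)>h(a)$ for \textsf{Binary-Search}, the conversion from $\Theta(2^M)$-smoothness and interval width to derivative control, and the explicit check that the returned point lies in $\partial_\delta f(\x_k)$ --- and your intermediate bound $O(\log(\delta 2^M/\epsilon)) = O(M+\log(\delta/\epsilon))$ is in fact slightly sharper than the stated $O(M\log(\delta/\epsilon))$, so the conclusion follows a fortiori.
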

\begin{proof}
For the outer loops of Algorithm~\ref{Alg:Full}, the number of iterations required to find a $(\delta, \epsilon)$-Goldstein stationary point is bounded by $O(\Delta\delta^{-1}\epsilon^{-1})$. For each outer loop, the number of selecting new elements required to get an approximation of $\partial_\delta f(\x_k)$ satisfying Eq.~\eqref{condition:descent} is bounded by $O(L^2\epsilon^{-2}\log(L/\epsilon))$. For selecting each new element in $\textsf{Binary-Search}$, the required number of $1^\textnormal{st}$ and $0^\textnormal{th}$ oracles is bounded by $O(M\log(\delta/\epsilon))$. Putting these pieces together yields the desired results. 
\end{proof}

\section{Concluding Remarks}\label{sec:conclu}
In this paper, we provide the lower and upper bounds on the complexity of finding an approximate Goldstein stationary point in deterministic nonsmooth and nonconvex optimization. For any deterministic algorithms that are accessible to both $1^\textnormal{st}$ and $0^\textnormal{th}$ oracles, we prove the dimension-dependent lower bound of $\Omega(d)$ on the complexity of finding a $(\delta, \epsilon)$-Goldstein stationary point for any fixed and finite dimension $d$ when $\delta, \epsilon > 0$ are smaller than some constants. Compared to the dimension-independent upper bounds for randomized algorithms that are accessible to $1^\textnormal{st}$ and $0^\textnormal{th}$ oracles, our results highlight the necessity of randomization in nonsmooth nonconvex optimization. Furthermore, we demonstrate the importance of $0^\textnormal{th}$ oracle by proving that any deterministic algorithm with only $1^\textnormal{st}$ oracle can not find an approximate Goldstein stationary point within the finite number of iterations up to some small constant tolerances. Finally, we propose a deterministic smoothing approach that achieves the smoothness parameter that is exponential in a certain parameter $M > 0$, and develop a deterministic algorithm with dimension-independent complexity bound of $\tilde{O}(M\delta^{-1}\epsilon^{-3})$. Future directions include the investigation of lower bound with improved dependence on $(d, \delta^{-1}, \epsilon^{-1})$, and the development of practical algorithms for nonsmooth nonconvex optimization. In particular, we have the following open problem:
\begin{quote}
\textbf{Open problem:} \textit{Is there a deterministic algorithm for nonsmooth nonconvex optimization with running time $\mathrm{poly}(d, 1/\epsilon, 1/\delta$) under a Lipschitz condition?}
\end{quote}

\section*{Acknowledgments}
The authors thank Guy Kornowski for pointing out an error in the first version. This work was supported in part by the Mathematical Data Science program of the Office of Naval Research under grant number N00014-18-1-2764 and by the Vannevar Bush Faculty Fellowship program
under grant number N00014-21-1-2941.

\bibliographystyle{plainnat}
\bibliography{ref}

\end{document}